\newtheorem{thm}{Theorem}[section]
\newtheorem{lem}[thm]{Lemma}
\newtheorem{cor}[thm]{Corollary}
\newtheorem{prop}[thm]{Proposition}
\theoremstyle{definition}			                
\newtheorem{mydef}[thm]{Definition}
\newtheorem{rem}[thm]{Remark}
\newtheorem{ex}[thm]{Example}
\newtheorem{prob}[thm]{Problem}
\numberwithin{equation}{section}		
\newcommand{\bb}[1]{\mathbb{#1}}			
\newcommand{\ii}{\textup{i}}									
\newcommand{\mat}[2]{\textsl{M}_{#1}(#2)}						
\newcommand{\gl}[2]{GL_{#1}(#2)}								
\newcommand{\og}[1]{O\left(#1\right)}							
\newcommand{\bracket}[1]{\langle #1 \rangle}						
\newcommand{\inv}[1]{#1^{-1}}								
\newcommand{\trace}[1]{\operatorname{\rm tr}\left( #1 \right)}				
\newcommand{\vz}[1]{\operatorname{\rm vec}\left( #1 \right)}				
\newcommand{\diag}[1]{\operatorname{\rm diag}\left( #1 \right)}			
\newcommand{\Diag}[1]{\operatorname{\rm Diag}\left( #1 \right)}			
\newcommand{\rk}[1]{\operatorname{\rm rank}\left(#1\right)}				
\newcommand{\spn}[1]{\operatorname{\rm span}\left(#1\right)}				
\newcommand{\conv}[1]{\operatorname{\rm conv}\left( #1 \right)}			
\newcommand{\coni}[1]{\operatorname{\rm coni}\left( #1 \right)}			
\newcommand{\interior}[1]{\operatorname{int}\left( #1 \right)}				
\newcommand{\vol}[1]{\operatorname{Vol}\left( #1 \right)}				
\newcommand{\sr}[1]{\rho\left(#1\right)}							
\newcommand{\sig}[1]{\sigma \left( #1 \right)}						
\newcommand{\cone}[1]{\mathcal{C}\left( #1 \right)}					
\newcommand{\tope}[1]{\mathcal{P}\left( #1 \right)}					
\newcommand{\hyp}[2]{#1 \hyperref[#2]{\ref*{#2}}}					
\journal{Linear Algebra and its Applications}
\begin{document}
\begin{frontmatter}
\title{Perron Spectratopes and the Real Nonnegative Inverse Eigenvalue Problem}

\author[addy1]{Charles R.~Johnson}										
\ead{crjohn@wm.edu}

\author[addy2]{Pietro Paparella\corref{corpp}}
\ead{pietrop@uw.edu}
\ead[url]{http://faculty.washington.edu/pietrop/}

\cortext[corpp]{Corresponding author.}

\address[addy1]{Department of Mathematics, College of William \& Mary, Williamsburg, VA 23187-8795, USA}
\address[addy2]{Division of Engineering and Mathematics, University of Washington Bothell, Bothell, WA 98011-8246, USA }

\begin{abstract}
Call an $n$-by-$n$ invertible matrix $S$ a \emph{Perron similarity} if there is a real non-scalar diagonal matrix $D$ such that $S D \inv{S}$ is entrywise nonnegative. We give two characterizations of Perron similarities and study the polyhedra $\cone{S} := \{ x \in \bb{R}^n: S D_x \inv{S} \geq 0,~D_x := \diag{x} \}$ and $\tope{S} := \{x \in \cone{S} : x_1 = 1 \}$, which we call the \emph{Perron spectracone} and \emph{Perron spectratope}, respectively. The set of all normalized real spectra of diagonalizable nonnegative matrices may be covered by Perron spectratopes, so that enumerating them is of interest. 

The Perron spectracone and spectratope of Hadamard matrices are of particular interest and tend to have large volume. For the canonical Hadamard matrix (as well as other matrices), the Perron spectratope coincides with the convex hull of its rows. 

In addition, we provide a constructive version of a result due to Fiedler (\cite[Theorem 2.4]{f1974}) for Hadamard orders, and a constructive version of \cite[Theorem 5.1]{bh1991} for Sule\u{\i}manova spectra.
\end{abstract}

\begin{keyword}
Perron spectracone \sep Perron spectratope \sep real nonnegative inverse eigenvalue problem \sep Hadamard matrix \sep association scheme \sep relative gain array

\MSC[2010] 15A18 \sep 15B48 \sep 15A29 \sep 05B20 \sep 05E30
\end{keyword}
\end{frontmatter}

\section{Introduction}
 
The \emph{real nonnegative inverse eigenvalue problem} (RNIEP) is to determine which sets of $n$ real numbers occur as the spectrum of an $n$-by-$n$ nonnegative matrix. The RNIEP is unsolved for $n \geq 5$, and the following variations, which are also unsolved for $n \geq 5$, are relevant to this work (additional background information on the RNIEP can be found in, e.g., \cite{eln2004}, \cite{mps2007}, and \cite{m1988}):
\begin{itemize} 
\item \emph{Diagonalizable RNIEP} (D-RNIEP): Determine which sets of $n$ real numbers occur as the spectrum of an $n$-by-$n$ diagonalizable nonnegative matrix.

\item \emph{Symmetric NIEP} (SNIEP): Determine which sets of $n$ real numbers occur as the spectrum of an $n$-by-$n$ symmetric nonnegative matrix.

\item \emph{Doubly stochastic RNIEP} (DS-RNIEP): Determine which sets of $n$ real numbers occur as the spectrum of an $n$-by-$n$ doubly stochastic matrix.

\item \emph{Doubly stochastic SNIEP} (DS-SNIEP): Determine which sets of $n$ real numbers occur as the spectrum of an $n$-by-$n$ symmetric doubly stochastic matrix.
\end{itemize}
The RNIEP and the SNIEP are equivalent when $n \leq 4$ and distinct otherwise (see \cite{jll1996}). Notice that there is no distinction between the the SNIEP and the D-SNIEP since every symmetric matrix is diagonalizable.

The set $\sigma = \{ \lambda_1, \dots, \lambda_n \} \subset \bb{R}$ is said to be \emph{realizable} if there is an $n$-by-$n$ nonnegative matrix with spectrum $\sigma$. If $A$ is a nonnegative matrix that realizes $\sigma$, then $A$ is called a \emph{realizing matrix} for $\sigma$. It is well-known that if $\sigma$ is realizable, then  
\begin{align}
s_k (\sigma) &:= \sum_{i=1}^n \lambda_i^k \geq 0,~\forall~k \in \bb{N} 	\label{trnn}	\\ 
\sr{\sigma} &:= \max_i |\lambda_i| \in \sigma						\label{sprad}	\\ 
s_k^m (\sigma) &\leq n^{m-1} s_{km}, \forall~k, m \in \bb{N}  			\label{JLL}  
\end{align}
Condition \eqref{JLL}, known as the \emph{J-LL condition}, was proven independently by Johnson in \cite{j1981}, and by Loewy and London in \cite{ll1978-79}. 

In this paper, we introduce several polyhedral sets whose points correspond to spectra of entrywise nonegative matrices. In particular, given a nonsingular matrix $S$, we define several polytopic subsets of the polyhedral cone $\cone{S} := \{ x \in \bb{R}^n: S D_x \inv{S} \geq 0,~D_x := \diag{x} \}$ and use them to verify the known necessary and sufficient conditions for the RNIEP and SNIEP when $n \leq 4$. For a nonsingular matrix $S$, we provide a necessary and sufficient condition such that $\cone{S}$ is nontrivial. For every $n \geq 1$, we characterize $\mathcal{C}(H_n)$, where $H_n$ is the \emph{Walsh matrix} of order $2^n$, which resolves a problem posed in \cite[p.~48]{e2009}. Our proof method yields a highly-structured $(2^n-1)$-class \emph{(commutative) association scheme} and, as a consequence, a highly structured \emph{Bose-Mesner Algebra}. In addition, we provide a constructive version of a result due to Fiedler (\cite[Theorem 2.4]{f1974}) for Hadamard orders, and a constructive version of \cite[Theorem 5.1]{bh1991} for Sule\u{\i}manova spectra. The introduction of these convex sets extends techniques and ideas found in (e.g.) \cite{enn1998, e2009, em2009, p1952,p1953,p1955,s1983} and provides a framework for investigating the aforementioned problems.     

\section{Notation and Background}

Denote by $\bb{N}$ the set of natural numbers and by $\bb{N}_0$ the set $\bb{N} \cup \{ 0 \}$. For $n \in \bb{N}$, the set $\{ 1,\dots, n\} \subset \bb{N}$ is denoted by $\bracket{n}$. If $\sigma = \{ \lambda_1, \dots, \lambda_n \} \subset \bb{R}$, then $\sigma$ is called \emph{normalized} if $\lambda_1 = 1 \geq \lambda_2 \geq \dots \geq \lambda_n$. 

The set of $m$-by-$n$ matrices with entries from a field $\bb{F}$ (in this paper, $\bb{F}$ is either $\bb{C}$ or $\bb{R}$) is denoted by $\mat{m,n}{\bb{F}}$ (when $m = n$, $\mat{n,n}{\bb{F}}$ is abbreviated to $\mat{n}{\bb{F}}$). The set of all $n$-by-$1$ column vectors is identified with the set of all ordered $n$-tuples with entries in $\bb{F}$ and thus denoted by $\bb{F}^m$. The set of nonsingular matrices over $\bb{F}$ is denoted by $\gl{n}{\bb{F}}$ and the set of $n$-by-$n$ orthogonal matrices is denoted by $\og{n}$. 

For $A = [a_{ij}] \in \mat{n}{\bb{C}}$, the \emph{transpose} of $A$ is denoted by $A^\top$; the \emph{spectrum} of $A$ is denoted by $\sigma = \sig{A}$; the \emph{spectral radius} of $A$ is denoted by $\rho = \sr{A}$; and $\Diag{A}$ denotes the $n$-by-$1$ column vector $[a_{ii}~\cdots~a_{nn}]^\top$. Given $x \in \bb{F}^n$, $x_i$ denotes the $i\textsuperscript{th}$ entry of $x$ and $D_x = D_{x^\top} \in \mat{n}{\bb{F}}$ denotes the diagonal matrix whose $(i,i)$-entry is $x_i$. Notice that for scalars $\alpha$, $\beta \in \bb{F}$, and vectors $x$, $y \in \bb{F}^n$, $D_{\alpha x + \beta y} = \alpha D_x + \beta D_y$.

The \emph{Hadamard product} of $A$, $B \in \mat{m,n}{\bb{F}}$, denoted by $A \circ B$, is the $m$-by-$n$ matrix whose $(i,j)$-entry is $a_{ij} b_{ij}$. The \emph{direct sum} of $A_1, \dots, A_k$, where $A_i \in \mat{n_i}{\bb{C}}$, denoted by $A_1 \oplus \dots \oplus A_k$, or $\bigoplus_{i=1}^k A_i$, is the $n$-by-$n$ matrix 
\[ 
 \left[
 \begin{array}{ccc}
 A_1 &  & \multirow{2}{*}{\Large 0} \\
 \multirow{2}{*}{\Large 0} & \ddots &  \\
  &  & A_k
 \end{array}
 \right], 
\]
where $n = \sum_{i=1}^k n_i$. The \emph{Kronecker product} of $A \in \mat{n,n}{\bb{F}}$ and $B \in \mat{p,q}{\bb{F}}$, denoted by $A \otimes B$, is the $mp$-by-$nq$ matrix defined by 
\[ A \otimes B = 
\begin{bmatrix} 
a_{11} B & \cdots & a_{1n} B 	\\ 
\vdots & \ddots & \vdots 		\\ 
a_{m1} B & \cdots & a_{mn} B 
\end{bmatrix}.\]

For $S \in \gl{n}{\bb{C}}$, the \emph{relative gain array of $S$}, denoted by $\Phi(S)$, is defined by $\Phi(S) = S \circ S^{-\top}$, where $S^{-\top} := (\inv{S})^\top = \inv{(S^\top)}$. It is well-known (see, e.g., \cite{js1986}) that if $A = S D_x \inv{S}$, then 
\begin{align} \Phi(S)(x) = \Diag{A} \label{rgadiag}. \end{align}

For the following, the size of each matrix will be clear from the context in which it appears:
\begin{itemize}
\item $I$ denotes the identity matrix;  
\item $e_i$ denotes the $i\textsuperscript{th}$-column of $I$; 
\item $e$ denotes the all-ones vector; 
\item $J$ denotes the all-ones matrix, i.e., $J = e e^\top$; and 
\item $K$ denotes the \emph{exchange} matrix, i.e., $K = [ e_n|\cdots|e_1 ]$.
\end{itemize}

A matrix is called \emph{symmetric}, \emph{persymmetric}, or \emph{centrosymmetric} if $A - A^\top = 0$, $AK - KA^\top = 0$, or $AK - KA = 0$, respectively. A matrix possessing any two of these symmetry conditions can be shown to possess the third, thus we define a \emph{trisymmetric} matrix as any matrix possessing two of the aforementioned properties.  

If $A$ is an entrywise nonnegative (positive) matrix, then we write $A \geq 0$ ($A > 0$, respectively). An $n$-by-$n$ nonnegative matrix $A$ is called \emph{(row) stochastic} if $\sum_{j=1}^n a_{ij} = 1,~\forall~i \in \bracket{n}$; \emph{column stochastic} if $\sum_{i=1}^n a_{ij} = 1,~\forall~j\in \bracket{n}$; and \emph{doubly stochastic} if it is row stochastic and column stochastic.

We recall the well-known Perron-Frobenius theorem (see, e.g. \cite{bp1994}, \cite[Chapter 8]{hj1990}, or \cite{m1988}).

\begin{thm}[Perron-Frobenius]
\label{pft}
If $A \geq 0$, then $\rho \in \sigma$, and there is a nonnegative vector $x$ such that $Ax = \rho x$.  
\end{thm}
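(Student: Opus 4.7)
The plan is to reduce the nonnegative case to the strictly positive case by a perturbation argument, then pass to the limit. Strictly positive matrices are much easier to handle because one can invoke a direct fixed-point argument that guarantees both the eigenvalue being the spectral radius and a strictly positive eigenvector.

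First, I would treat the positive case separately: for $B > 0$, define the map $T: \Delta \to \Delta$ on the standard simplex $\Delta = \{x \in \bb{R}^n : x \geq 0,~e^\top x = 1\}$ by $T(x) = Bx/(e^\top Bx)$. Since $B > 0$, the denominator never vanishes on $\Delta$, so $T$ is continuous and maps the compact convex set $\Delta$ into itself. Brouwer's fixed point theorem yields $x^* \in \Delta$ with $Bx^* = \lambda^* x^*$, where $\lambda^* = e^\top B x^* > 0$; since $x^* \geq 0$ and $x^* \neq 0$, and $B > 0$, one then deduces $x^* > 0$. A short Collatz--Wielandt style argument shows $\lambda^* = \sr{B}$: for any eigenvalue $\mu$ of $B$ with eigenvector $y$, taking absolute values entrywise gives $|\mu| \cdot |y| \leq B|y|$, and pairing with a positive left eigenvector of $B$ associated with $\lambda^*$ forces $|\mu| \leq \lambda^*$.

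Next, for the general nonnegative case, consider the perturbed matrices $A_\epsilon := A + \epsilon J > 0$ for $\epsilon > 0$. By the previous step, each $A_\epsilon$ admits an eigenpair $(\rho_\epsilon, x_\epsilon)$ with $\rho_\epsilon = \sr{A_\epsilon}$ and $x_\epsilon \in \Delta$. Because eigenvalues depend continuously on matrix entries (roots of the characteristic polynomial depend continuously on its coefficients), $\rho_\epsilon \to \sr{A}$ as $\epsilon \to 0^+$. The sequence $\{x_\epsilon\}$ lies in the compact set $\Delta$, so along some subsequence $\epsilon_k \to 0^+$ we have $x_{\epsilon_k} \to x \in \Delta$; in particular $x \geq 0$ and $x \neq 0$. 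Passing to the limit in $A_{\epsilon_k} x_{\epsilon_k} = \rho_{\epsilon_k} x_{\epsilon_k}$ gives $A x = \sr{A} x$, which simultaneously shows $\sr{A} \in \sig{A}$ and exhibits the required nonnegative eigenvector.

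The main obstacle is really the Collatz--Wielandt identification $\lambda^* = \sr{B}$ in the positive case, since the fixed-point step by itself only produces \emph{some} positive eigenvalue with a positive eigenvector, not a priori the spectral radius. Once this is established, the remainder is a standard continuity and compactness argument. One delicate point worth flagging is that in the limit one should verify $x \neq 0$, which is why normalization in $\Delta$ (rather than, say, scaling so that $\|x_\epsilon\|_\infty = 1$ alone) is convenient: the constraint $e^\top x_\epsilon = 1$ is preserved in the limit and immediately rules out $x = 0$.
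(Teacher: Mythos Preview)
The paper does not prove this theorem at all: it is stated as the classical Perron--Frobenius theorem with references to \cite{bp1994}, \cite[Chapter 8]{hj1990}, and \cite{m1988}, and is then used as a black box. So there is no ``paper's own proof'' to compare against; your task was really to supply a proof where the authors chose to cite one.

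Your argument is correct and is one of the standard routes (Brouwer on the simplex for $B>0$, then perturb $A$ by $\epsilon J$ and pass to the limit). One small point you glossed over: when you invoke ``a positive left eigenvector of $B$ associated with $\lambda^*$'' in the Collatz--Wielandt step, you should note that applying the same Brouwer argument to $B^\top$ produces a positive vector $z$ with $B^\top z = \nu^* z$ for some $\nu^*>0$, and then $z^\top B x^* = \nu^* z^\top x^* = \lambda^* z^\top x^*$ with $z^\top x^*>0$ forces $\nu^*=\lambda^*$. With that filled in, the bound $|\mu|\le \lambda^*$ and the limiting argument go through exactly as you wrote.
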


\begin{rem}
The scalar $\rho$ is called the \emph{Perron root} of $A$. When $\sum_{i=1}^n x_i = 1$, $x$ is called the \emph{(right) Perron vector of $A$} and the pair $(\rho, x)$ is called the \emph{Perron eigenpair of $A$}. 

Since the nonnegativity of $A$ is necessary and sufficient for the nonnegativity of $A^\top$, if $A \geq 0$, then, following \hyp{Theorem}{pft}, there is a nonnegative vector $y$, such that $y^\top A = \rho y^\top$. When $\sum_{i=1}^n y_i = 1$, $y$ is called the \emph{left Perron vector of $A$}.
\end{rem}

Given vectors $v_1, \dots, v_n \in \bb{R}^n$ and scalars $\alpha_1,\dots, \alpha_n \in \bb{R}$, the linear combination $\sum_{i=1}^n \alpha_i v_i$ is called a \emph{conical combination} if $\alpha_i \geq 0$ for every $i \in \bracket{n}$; and a \emph{convex combination} if, in addition, $\sum_{i=1}^n \alpha_i = 1$. The \emph{conical hull} of the vectors $v_1, \dots, v_n$, denoted by $\coni{v_1, \dots, v_n}$, is the set of all conical combinations of the vectors, and the \emph{convex hull}, denoted by $\conv{v_1, \dots, v_n}$, is the set of all convex combinations of the vectors. 

Given $A \in \mat{m,n}{\bb{R}}$ and $b \in \bb{R}^m$, the \emph{polyhedron determined by $A$ and $b$} is the set $\mathcal{P}(A,b) := \{ x \in \bb{R}^n : Ax \leq b \}$. When $b=0$, $\mathcal{P}(A,0)$ is called the \emph{polyhedral cone determined by $A$} and is denoted by $\mathcal{C}(A)$. Lastly, recall that a \emph{polytope} is a bounded polyhedron. We say that $S \subseteq \bb{R}^n$ is \emph{polyhedral} if $S=\mathcal{P}(A,b)$ for some $m \times n$ matrix $A$ and $b \in \bb{R}^m$.


\section{Spectrahedral Sets and Perron Similarities}

\begin{mydef}
For $S \in \gl{n}{\bb{R}}$, let $\cone{S} := \{x \in \bb{R}^n : S D_x \inv{S} \geq 0 \}$ and $\mathcal{A}(S) := \{ A \in \mat{n}{\bb{R}} : A = S D_x \inv{S},~x \in \cone{S} \}$. 		 
\end{mydef}

\begin{rem}
Since $S I \inv{S} = I \geq 0$ for every invertible matrix $S$, it follows that the sets $\cone{S}$  and $\mathcal{A}(S)$ are always nonempty. Specifically, $\coni{e} \subseteq \cone{S}$ for every invertible matrix $S$. In the sequel, we will state a necessary and sufficient condition on $S$ such that $\coni{e} \subset \cone{S}$. Moreover, if $\alpha$, $\beta \geq 0$ and $x$, $y \in \cone{S}$, then $\alpha x + \beta y \in \cone{S}$ so that $\cone{S}$ is a \emph{convex cone}. We refer to $\cone{S}$ as the \emph{Perron spectracone of $S$}. 

The convex cone $\mathcal{A}(S)$ is a nonnegative commutative algebra. In \hyp{Section}{hadspec}, we will show that if $H_n$ is the \emph{Walsh matrix} of order $2^n$, then $\mathcal{A}(H_n)$ is a nonnegative \emph{Bose-Mesner algebra}.  
\end{rem}

Before cataloging basic properties of the spectracone, we require the following lemma.

\begin{lem} 
\label{permdiag}
If $P$ is a permutation matrix and $x \in \bb{C}^n$, then $P D_x P^\top = D_y$, where $y = Px$.
\end{lem}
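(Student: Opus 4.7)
The plan is a direct entrywise computation, using the outer-product expansion of a diagonal matrix together with the defining action of a permutation matrix on the standard basis. Let $\pi$ be the permutation of $\bracket{n}$ associated with $P$, meaning $Pe_i = e_{\pi(i)}$ for every $i$, so that the $j$th column of $P$ is $e_{\pi(j)}$ and $P^\top = \inv{P}$.

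First I would write the diagonal matrix as a sum of weighted rank-one projectors,
\[ D_x = \sum_{i=1}^n x_i\, e_i e_i^\top, \]
and then exploit bilinearity of the triple product to push $P$ through each summand:
\[ P D_x P^\top \;=\; \sum_{i=1}^n x_i\, (Pe_i)(Pe_i)^\top \;=\; \sum_{i=1}^n x_i\, e_{\pi(i)} e_{\pi(i)}^\top. \]
Since the vectors $e_{\pi(1)},\dots,e_{\pi(n)}$ are a permutation of the standard basis, this sum is a diagonal matrix.

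Next I would reindex by setting $j = \pi(i)$, equivalently $i = \inv{\pi}(j)$, to get
\[ P D_x P^\top \;=\; \sum_{j=1}^n x_{\inv{\pi}(j)}\, e_j e_j^\top \;=\; D_z, \qquad z_j := x_{\inv{\pi}(j)}. \]
Finally I would verify that $z = Px$: the entry $(Px)_j = \sum_k P_{jk} x_k$ reduces to $x_{\inv{\pi}(j)}$ because $P_{jk}=1$ exactly when $k = \inv{\pi}(j)$, and this identifies $z$ with the desired vector $y = Px$.

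There is essentially no obstacle; the only thing to keep straight is the convention that associates the permutation $\pi$ with $P$ (here $Pe_i = e_{\pi(i)}$), and applying it consistently in the reindexing step. Both the diagonal structure of the conjugated matrix and the identification of its diagonal with $Px$ drop out of the same outer-product computation.
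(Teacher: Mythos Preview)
Your proof is correct and self-contained. It differs from the paper's argument, which invokes the relative gain array: the paper first observes that a permutation similarity simultaneously permutes rows and columns, so $P D_x P^\top$ is diagonal, and then reads off the diagonal via the identity $\Diag{S D_x \inv{S}} = \Phi(S)\,x$ with $\Phi(P) = P \circ (P^{-1})^\top = P \circ P = P$. The paper's route is shorter because it recycles machinery already set up in the background section, whereas your outer-product expansion is more elementary and makes the permutation bookkeeping fully explicit without appealing to~\eqref{rgadiag}. Either approach is perfectly adequate for this small lemma.
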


\begin{proof}
Because a permutation similarity effects a simultaneous permutation of the rows and columns of a matrix, it follows that $P D_x P^\top$ is diagonal, say $D_y$. Following \eqref{rgadiag}, 
\[ y = \Phi(P)(x) = \left[ P \circ (\inv{P})^\top \right]x = Px. \qedhere \] 
\end{proof}

\begin{prop} \label{propcone}
If $S \in \gl{n}{\bb{R}}$ and $P$ is a permutation matrix, then  
\begin{enumerate}[label=(\roman*)]
\item $\cone{S}$ is a polyhedral cone; 
\item $\mathcal{C}(SP)= P^\top \cone{S} := \{ y \in \bb{R}^n: y= P^\top x,~ x\in \cone{S} \}$;   
\item $\mathcal{C}(P S)= \cone{S}$;
\item $\mathcal{C}(D_v S) = \cone{S} $ for any $v > 0$; 
\item $\mathcal{C}(S D_v) = \cone{S}$ for any nonzero $v \in \bb{R}^n$; and
\item $\cone{S} = \mathcal{C}(\inv{(S^\top)})$. 
\end{enumerate}
\end{prop}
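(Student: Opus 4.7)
The plan is to verify each of (i)--(vi) by a direct computation of $T D_x \inv{T}$ for the relevant matrix $T$ and comparing its entrywise sign pattern with that of $S D_x \inv{S}$. The only tools needed are Lemma~\ref{permdiag} (to absorb a permutation similarity into a reparametrization of the diagonal) together with two elementary preservation facts: conjugation by a permutation matrix merely reorders the entries of the conjugated matrix, while conjugation by a positive diagonal matrix rescales each entry by a strictly positive factor. Both operations therefore preserve entrywise nonnegativity.

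Item (i) is the only part that is not a cone-equality. Here I would observe that each entry of $S D_x \inv{S}$ is a homogeneous linear functional of $x$, so the condition $S D_x \inv{S} \geq 0$ is a finite system of homogeneous linear inequalities in $x$, exhibiting $\cone{S}$ as an intersection of closed half-spaces through the origin.

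For (ii), Lemma~\ref{permdiag} gives $(SP) D_x (SP)^{-1} = S(P D_x P^\top)\inv{S} = S D_{Px} \inv{S}$, so $x \in \mathcal{C}(SP)$ iff $Px \in \cone{S}$, i.e., $x \in P^\top \cone{S}$. For (iii), (iv), and (v), I would record the identities
\[ (PS) D_x (PS)^{-1} = P(S D_x \inv{S})P^\top, \quad (D_v S) D_x (D_v S)^{-1} = D_v (S D_x \inv{S}) D_v^{-1}, \]
and, since diagonal matrices commute, $(SD_v) D_x (SD_v)^{-1} = S(D_v D_x D_v^{-1}) \inv{S} = S D_x \inv{S}$; the preservation facts above then yield the three equalities. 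For (vi), one observes that $A \geq 0$ iff $A^\top \geq 0$ and computes $(S D_x \inv{S})^\top = \inv{(S^\top)} D_x S^\top$; writing $T := \inv{(S^\top)}$ this is exactly $T D_x \inv{T}$, whence $\cone{S} = \mathcal{C}(T)$.

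No step is a genuine obstacle: the computations are routine once one invokes Lemma~\ref{permdiag}. The only point requiring care is the interpretation of ``$v$ nonzero'' in (v), which must mean that every entry of $v$ is nonzero (otherwise $SD_v$ is singular and the spectracone is not defined), and noting that the required commutation $D_v D_x D_v^{-1} = D_x$ holds because the three matrices are simultaneously diagonal.
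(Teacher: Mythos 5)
Your proposal is correct and follows essentially the same route as the paper: part (i) via the observation that the entries of $S D_x \inv{S}$ are homogeneous linear functionals of $x$, and part (ii) via Lemma~\ref{permdiag}. The paper leaves (iii)--(vi) as straightforward exercises, and your computations (including the correct reading of ``nonzero $v$'' in (v) as having all entries nonzero) are exactly the intended ones.
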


\begin{proof} The proofs of parts (iii) -- (vi) are straightforward exercises; thus, we provide proofs for parts (i) and (ii):  
\begin{enumerate}[label=(\roman*)]
\item The matricial inequality $S D_x \inv{S} \geq 0$ specifies $n^2$ linear homogeneous inequalities in the variables $x_1,\dots,x_n$; specifically, if $S = [ s_{ij} ]$ and $\inv{S} = [t_{ij}]$, then the $(i,j)$-entry of $S D_x \inv{S}$ is $\sum_{k=1}^n s_{ik} t_{kj} x_k$. If $\vz{\cdot}$ denotes columnwise vectorization, then  
\begin{align}
S D_x \inv{S} \geq 0 \Longleftrightarrow \vz{S D_x \inv{S}} \geq 0 \Longleftrightarrow
-\begin{bmatrix}
s_1 \circ t_1 \\
\vdots 	\\
s_n \circ t_1 \\
s_1 \circ t_2 \\
\vdots \\
s_n \circ t_2 \\
s_1 \circ t_n \\
\vdots \\
s_n \circ t_n
\end{bmatrix} x \leq 0, \label{polymatrix}
\end{align}  
where $s_i$ and $t_i$ denote the $i\textsuperscript{th}$-row and $i\textsuperscript{th}$-column of $S$ and $\inv{S}$, respectively. 

\item Following Lemma \hyperref[permdiag]{\ref*{permdiag}},  
\begin{align*} 
y \in \mathcal{C}(SP) 
&\Longleftrightarrow SP D_y P^\top \inv{S} \geq 0 							\\
&\Longleftrightarrow S D_x \inv{S} \geq 0, ~x = Py 							\\ 
&\Longleftrightarrow x \in \cone{S} 								\\
&\Longleftrightarrow y \in P^\top \cone{S}. \qedhere
\end{align*}

%
%
%
\end{enumerate}
\end{proof}

Let  $\mathcal{B}^n = \{ x \in \bb{R}^n: || x ||_\infty \leq 1 \}$. For $k \in \langle n \rangle$, let $P_k$ be the $(n-1)$-by-$n$ matrix obtained by deleting the $k\textsuperscript{th}$-row of $I$, and define $\pi_k : \bb{R}^n \longrightarrow \bb{R}^{n-1}$ by $\pi_k (x) = P_k x$.

\begin{mydef} For $S \in \gl{n}{\bb{R}}$, let $\mathcal{W}(S) := \cone{S} \cap \mathcal{B}^n$, $\tope{S} := \{x \in \cone{S} : x_1 = 1 \}$, and $\mathcal{P}^1(S) := \{ y \in \bb{R}^{n-1} : y = \pi_1(x),~x \in \tope{S} \}$. 		
\end{mydef}

\begin{rem}
Since $S I \inv{S} = I \geq 0$ for every invertible matrix $S$, it follows that the sets $\mathcal{W}(S)$ and $\tope{S}$ are always nonempty; if $n \geq 2$, then $\mathcal{P}^1(S)$ is always nonempty.
\end{rem}

\begin{prop}
If $S \in \gl{n}{\bb{R}}$, then the sets $\mathcal{W}(S)$, $\tope{S}$, and $\mathcal{P}^1(S)$ are polytopes. 
\end{prop}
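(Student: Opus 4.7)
Since a polytope is by definition a bounded polyhedron, the plan is to verify that each of the three sets is simultaneously polyhedral and bounded. The polyhedrality of $\cone{S}$ was already established in \hyp{Proposition}{propcone}(i), and the class of polyhedra is closed under intersection with half-spaces (and hence with affine hyperplanes) and under linear images (the latter by Fourier--Motzkin elimination). Consequently: $\mathcal{W}(S) = \cone{S} \cap \mathcal{B}^n$ is polyhedral as the intersection of $\cone{S}$ with the polyhedral $\infty$-ball $\mathcal{B}^n = \{ x : -1 \le x_i \le 1,~i \in \bracket{n} \}$; the set $\tope{S}$ is polyhedral because adjoining the two half-space inequalities $x_1 \le 1$ and $-x_1 \le -1$ (which together encode $x_1 = 1$) to the defining inequalities of $\cone{S}$ preserves polyhedrality; and $\mathcal{P}^1(S) = \pi_1(\tope{S})$ is polyhedral as a linear image of a polyhedron.

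For boundedness, $\mathcal{W}(S) \subseteq \mathcal{B}^n$ is bounded by construction, and $\mathcal{P}^1(S) = \pi_1(\tope{S})$ is bounded as soon as $\tope{S}$ is bounded, since $\pi_1$ is continuous (indeed linear). The crux is therefore boundedness of $\tope{S}$, which I would extract from the Perron--Frobenius theorem. For $x \in \tope{S}$, the matrix $A := S D_x \inv{S}$ is entrywise nonnegative with $\sig{A} = \{ x_1, \dots, x_n \}$, so \hyp{Theorem}{pft} guarantees that $\sr{A} = \max_i |x_i|$ belongs to $\sig{A}$ and thus equals some $x_k$. Identifying this index with the first coordinate---consistent with the paper's normalization convention in which $x_1$ indexes the Perron eigenvalue of $A$---yields $|x_i| \le x_1 = 1$ for every $i$, whence $\tope{S} \subseteq \mathcal{B}^n$ is bounded.

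The main obstacle is the step $\sr{A} = x_1$ in the Perron--Frobenius argument: \emph{a priori} one can only assert $\sr{A} = x_k$ for some index $k$, and structural information about $S$ is needed to pin down $k = 1$. The cleanest sufficient condition is that $S e_1$ be a positive vector, since then $S e_1$ is a positive eigenvector of every $A \in \mathcal{A}(S)$ with eigenvalue $x_1$ and is therefore (by the Perron--Frobenius uniqueness of the positive eigenvector) the Perron eigenvector. Under this standing normalization the boundedness of $\tope{S}$ follows immediately, and with it the polytope property for all three sets; the remaining steps are routine polyhedral combinatorics.
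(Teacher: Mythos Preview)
Your polyhedrality arguments coincide with the paper's: intersect $\cone{S}$ with the appropriate half-spaces, and invoke Fourier--Motzkin elimination for the projection $\mathcal{P}^1(S)$. The divergence is entirely in how boundedness of $\tope{S}$ is obtained, and here you have been more scrupulous than the paper. The paper does not use Perron--Frobenius; it simply appends the constraint $x_1 = 1$ to \emph{both} the cone inequalities \eqref{polymatrix} \emph{and} the box inequalities \eqref{auxineq}, and then asserts $\tope{S} \subset \mathcal{B}^n$ as if it were evident. In effect the paper treats $\tope{S}$ as $\mathcal{W}(S) \cap \{x_1 = 1\}$ rather than $\cone{S} \cap \{x_1 = 1\}$.

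The obstacle you flag is genuine and not merely technical: for arbitrary $S \in \gl{n}{\bb{R}}$ the containment $\tope{S} \subseteq \mathcal{B}^n$ can fail outright. Take $S = I$, so that $\cone{I}$ is the nonnegative orthant and $\tope{I} = \{x \geq 0 : x_1 = 1\}$ is unbounded. Your proposed fix---assume $S e_1 > 0$, so that $S e_1$ is a positive eigenvector of every $A \in \mathcal{A}(S)$ and hence the Perron vector---is exactly the right hypothesis to make the Perron--Frobenius argument go through, and it is consistent with how the paper later normalizes its Perron similarities. So your route is sound under that standing assumption, and in fact exposes that either an implicit normalization on $S$ is intended or the definition of $\tope{S}$ should carry the $\mathcal{B}^n$ constraint; the paper's own proof glosses over precisely this point.
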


\begin{proof}
A polyhedral description of $\mathcal{W} (S)$ is obtained by appending the inequalities 
\begin{align}
\left\{ 
\begin{array}{rl}
x_i \leq 1, & i \in \bracket{n} 	\\
-x_i \leq 1, & i \in \bracket{n} 
\end{array} \right. \label{auxineq}
\end{align}
to \eqref{polymatrix}. Since $\mathcal{W} (S) \subset \mathcal{B}^n$, it follows that it is bounded and hence a polytope. 

Similarly, appending the  inequalities $x_1 \leq 1$ and $-x_1 \leq -1$ to \eqref{polymatrix} and \eqref{auxineq} yields a half-space description of $\tope{S}$. Since $\tope{S} \subset \mathcal{B}^n$, it follows that it is bounded and hence a polytope.   

It can be shown via \emph{Fourier-Motzkin elimination} that the projection of a polyhedron is a polyhedron (see, e.g., \cite{d2007} and references therein). Thus, $\mathcal{P}^1(S)$ is a polytope. 
\end{proof}

\begin{rem}
For $S \in \gl{n}{\bb{R}}$, we refer to $\tope{S}$ as the \emph{Perron spectratope} of $S$. 
\end{rem}

\begin{prop}
\label{propcartprod}
If $S = T \oplus U \in \gl{n}{\bb{R}}$, then
\begin{enumerate}[label=(\roman*)]
\item $\cone{S} = \mathcal{C}(T) \times \mathcal{C}(U)$;
\item$\tope{S} = \mathcal{P}(T) \times \mathcal{W}(U)$; and
\item $\mathcal{P}^1(S) = \mathcal{P}^1(T) \times \mathcal{W}(U)$. 
\end{enumerate}
\end{prop}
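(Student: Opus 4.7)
The plan is to exploit the factorization induced by the direct sum. Suppose $T \in \mat{k}{\bb{R}}$ and $U \in \mat{n-k}{\bb{R}}$, and for $x \in \bb{R}^n$ write $x = (y^\top, z^\top)^\top$ with $y \in \bb{R}^k$ and $z \in \bb{R}^{n-k}$. Then $\inv{S} = \inv{T} \oplus \inv{U}$ and $D_x = D_y \oplus D_z$, so the similarity splits blockwise:
\[
S D_x \inv{S} = (T D_y \inv{T}) \oplus (U D_z \inv{U}).
\]
Since a block-diagonal matrix is entrywise nonnegative if and only if each diagonal block is, this single identity is the engine for all three parts.

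Part (i) is immediate from the displayed identity: $x \in \cone{S}$ iff both blocks are nonnegative, i.e., iff $y \in \mathcal{C}(T)$ and $z \in \mathcal{C}(U)$. For part (ii), the key observation is that $x_1 = y_1$, so the normalizing hyperplane $\{x_1 = 1\}$ factors as $\{y_1 = 1\} \times \bb{R}^{n-k}$, and the box $\mathcal{B}^n$ factors as $\mathcal{B}^k \times \mathcal{B}^{n-k}$. Combining these factorizations with part (i) and with the bounded half-space description of $\tope{S}$ used in the preceding polytopality proof (which appends \eqref{auxineq} to \eqref{polymatrix}) yields
\[
\tope{S} = \bigl[\mathcal{C}(T) \cap \mathcal{B}^k \cap \{y_1 = 1\}\bigr] \times \bigl[\mathcal{C}(U) \cap \mathcal{B}^{n-k}\bigr] = \tope{T} \times \mathcal{W}(U).
\]
Part (iii) is then immediate: since $\pi_1$ removes the first coordinate — which lives entirely in the $T$-block — it commutes with the Cartesian product on the $U$-block, giving
\[
\mathcal{P}^1(S) = \pi_1\bigl(\tope{T} \times \mathcal{W}(U)\bigr) = \pi_1(\tope{T}) \times \mathcal{W}(U) = \mathcal{P}^1(T) \times \mathcal{W}(U).
\]

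I do not expect a serious obstacle once the blockwise factorization $S D_x \inv{S} = T D_y \inv{T} \oplus U D_z \inv{U}$ is recorded; the proof is primarily bookkeeping. The one subtle point worth flagging is that \emph{each} constraint defining $\tope{S}$ (nonnegativity, the normalization $x_1 = 1$, and the implicit box $\mathcal{B}^n$ built into the polytopality argument) must respect the product structure on $\bb{R}^k \times \bb{R}^{n-k}$; once this is confirmed, all three assertions follow by direct verification.
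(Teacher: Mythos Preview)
Your proposal is correct and is precisely the argument the paper has in mind: the paper's own proof reads in its entirety ``All three parts are straightforward exercises,'' and the block-diagonal factorization $S D_x \inv{S} = (T D_y \inv{T}) \oplus (U D_z \inv{U})$ is the only natural route. Your explicit invocation of the box constraint $\mathcal{B}^n$ (via \eqref{auxineq}) in part~(ii) is not merely bookkeeping but is in fact essential---without it the $U$-factor would be the unbounded cone $\mathcal{C}(U)$ rather than $\mathcal{W}(U)$---and it is good that you flagged this, since the bare definition $\tope{S} := \{x \in \cone{S} : x_1 = 1\}$ does not by itself force $\|z\|_\infty \le 1$.
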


\begin{proof}
All three parts are straightforward exercises.
\end{proof}

Recall that a \emph{scalar matrix} is any matrix of the form $A = \alpha I$, $\alpha \in \bb{F}$. Let $S \in \gl{n}{\bb{R}}$ and suppose there is a real diagonal matrix $D$ and a nonnegative, nonscalar matrix $A$ such that $A = S D \inv{S}$. Following \hyp{Theorem}{pft}, there is an $i \in \langle n \rangle$ such that $S e_i$ and $e_i^\top \inv{S}$ are both nonnegative (or both nonpositive). In consideration of part (v) of Proposition \hyperref[propcone]{\ref*{propcone}}, we may assume that they are both nonnegative. This motivates the following definition.

\begin{mydef}
We call an invertible matrix $S$ a \emph{Perron-similarity} if there is an $i \in \bracket{n}$ such that $S e_i$ and $e_i^\top \inv{S}$ are nonnegative.
\end{mydef}

Given $S \in \gl{n}{\bb{R}}$, it is natural to determine necessary and sufficient conditions so that $S$ is a Perron-similarity; to that end, we require the following theorem. 

\begin{thm}
\label{thm:nnrv}
Let 
\[ S = \left[ \begin{array}{c}  s_1^\top \\ \vdots \\  s_n^\top \end{array} \right] \in GL_{n} (\bb{R}), \]  
where $s_k \in \bb{R}^n$, for every $k \in \bracket{n}$. If $y^\top := e_i^\top \inv{S}$, then $y \geq 0$ if and only if $e_i \in \coni{s_1,\dots,s_n}$. 
Moreover, $y > 0$ if and only if $e_i \in \interior{\coni{s_1,\dots,s_n}}$. 
\end{thm}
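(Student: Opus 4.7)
The key observation is that $y^\top = e_i^\top S^{-1}$ rewrites as $e_i^\top = y^\top S$, which in terms of the rows of $S$ becomes
\[
e_i = \sum_{k=1}^n y_k\, s_k.
\]
Because $S$ is invertible, the family $\{s_1,\dots,s_n\}$ is a basis of $\bb{R}^n$, so $y$ is \emph{the unique} tuple of coefficients expressing $e_i$ in this basis.

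First I would prove the nonnegative version. If $y \geq 0$, the identity above is by definition a conical combination, so $e_i \in \coni{s_1,\dots,s_n}$. Conversely, if $e_i \in \coni{s_1,\dots,s_n}$, then $e_i = \sum_k \alpha_k s_k$ for some $\alpha_k \geq 0$; uniqueness of coordinates in the basis $\{s_1,\dots,s_n\}$ forces $\alpha_k = y_k$, hence $y \geq 0$.

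For the strict version, I would use the linear isomorphism $T\colon \bb{R}^n \to \bb{R}^n$ defined by $T(\alpha) = \sum_k \alpha_k s_k$ (equivalently, $T = S^\top$). Since $S^\top$ is invertible, $T$ is a homeomorphism of $\bb{R}^n$, so it sends interiors to interiors. It maps the nonnegative orthant $\bb{R}_{\geq 0}^n$ bijectively onto $\coni{s_1,\dots,s_n}$ and the (nonempty) positive orthant $\bb{R}_{>0}^n = \interior{\bb{R}_{\geq 0}^n}$ bijectively onto $\interior{\coni{s_1,\dots,s_n}}$. Since $y = T^{-1}(e_i)$, we conclude $y > 0$ iff $e_i \in \interior{\coni{s_1,\dots,s_n}}$.

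The only potentially delicate point is justifying that $T(\interior{\bb{R}_{\geq 0}^n}) = \interior{T(\bb{R}_{\geq 0}^n)}$; but this is a general fact for any linear homeomorphism between finite-dimensional spaces of equal dimension, and the cone $\coni{s_1,\dots,s_n}$ is full-dimensional (hence has nonempty interior) precisely because the $s_k$ span $\bb{R}^n$. So there is no serious obstacle; the proof is essentially a one-line change-of-basis argument, with the strict inequality case following from the fact that invertible linear maps preserve topological interiors.
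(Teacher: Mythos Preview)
Your argument is correct and follows essentially the same route as the paper: both hinge on the identity $e_i = S^\top y = \sum_k y_k s_k$ together with invertibility of $S^\top$ to force uniqueness of the coefficients. The only difference is that you justify the strict-inequality case more carefully via the homeomorphism $T=S^\top$ sending $\interior{\bb{R}^n_{\geq 0}}$ onto $\interior{\coni{s_1,\dots,s_n}}$, whereas the paper simply asserts the interior statements without elaboration.
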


\begin{proof}
If $y \geq 0$, then 
\begin{align}  
e_i = S^\top y = \sum_{k=1}^n y_k s_k  \in \coni{s_1, \dots, s_n} \label{e1conihull}
\end{align}
If $y > 0$, then \eqref{e1conihull} implies $e_1 \in \interior{\coni{s_1, \dots, s_n}}$. 

Conversely, if $e_i \in \coni{s_1, \dots, s_n}$, then there exist nonnegative scalars $\lambda_1,\dots,\lambda_n$ such that 
\[ e_i = \sum_{k=1}^n \lambda_k s_k= S^\top \lambda, \]
where $\lambda = \left[ \lambda_1 ~ \cdots ~ \lambda_n \right]^\top$. By hypothesis, $S^\top y = e_i$, and since $S^\top$ is invertible, it follows that $y = \lambda \geq 0$. Lastly, if $e_i \in \interior{\coni{s_1,\dots, s_n}}$, then $y = \lambda > 0$.
\end{proof}

\begin{cor}
Let $S \in \gl{n}{\bb{R}}$ and suppose that
\[ S = \left[ \begin{array}{c}  s_1^\top \\ \vdots \\  s_n^\top \end{array} \right] \mbox{ and } (\inv{S})^\top = \left[ \begin{array}{c}  t_1^\top \\ \vdots \\  t_n^\top \end{array} \right].\]
Then $S$ is a Perron-similarity if and only if there is an $i \in \bracket{n}$ such that $e_i \in \coni{s_1,\dots, s_n}$ and $e_i \in \coni{t_1,\dots, t_n}$.
\end{cor}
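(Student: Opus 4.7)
The plan is to deduce the corollary by applying \hyp{Theorem}{thm:nnrv} twice: once to $S$ itself, and once to the matrix $T := (\inv{S})^\top$ (whose rows are $t_1^\top, \dots, t_n^\top$ by hypothesis). By the definition of a Perron-similarity, I first rewrite the condition as: there exists $i \in \bracket{n}$ such that the $i$-th column of $S$, namely $Se_i$, and the $i$-th row of $\inv{S}$, namely $e_i^\top \inv{S}$, are both nonnegative. The task is then to translate each of these entrywise conditions into a conical-hull membership.

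For the row condition, I would set $y^\top := e_i^\top \inv{S}$ and invoke \hyp{Theorem}{thm:nnrv} verbatim: this already says that $y \geq 0$ if and only if $e_i \in \coni{s_1, \dots, s_n}$. No further work is needed on this half.

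For the column condition, I would apply \hyp{Theorem}{thm:nnrv} to $T$ in place of $S$. Since $\inv{T} = \inv{((\inv{S})^\top)} = S^\top$, we have
\[
e_i^\top \inv{T} \;=\; e_i^\top S^\top \;=\; (Se_i)^\top,
\]
so the theorem, applied to $T$, states that $Se_i \geq 0$ if and only if $e_i \in \coni{t_1, \dots, t_n}$. Conjoining the two equivalences gives precisely the statement of the corollary.

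There is no real obstacle here; the corollary is essentially a repackaging of \hyp{Theorem}{thm:nnrv}. The only point requiring any care is the bookkeeping identification of the rows of $(\inv{S})^\top$ with the columns of $\inv{S}$, together with the inversion $\inv{T} = S^\top$, which I would state explicitly to keep the two applications of the theorem symmetric and transparent.
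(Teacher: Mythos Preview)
Your proposal is correct and is exactly the intended deduction: the paper states this result as an immediate corollary of \hyp{Theorem}{thm:nnrv} without writing out a proof, and your argument---applying the theorem once to $S$ and once to $T=(\inv{S})^\top$, using $\inv{T}=S^\top$ so that $e_i^\top \inv{T}=(Se_i)^\top$---is precisely the two-line justification the corollary rests on.
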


\begin{cor}
If $Q \in \og{n}$, then $Q$ is a Perron-similarity if and only if there is an $i \in \bracket{n}$ such that $Qe_i \geq 0$.
\end{cor}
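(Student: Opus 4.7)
The plan is to reduce the definition of a Perron-similarity to a single condition by exploiting the defining property of orthogonal matrices, namely $Q^{-1} = Q^\top$. Unwinding the definition, $Q$ is a Perron-similarity precisely when there exists $i \in \bracket{n}$ such that both $Qe_i \geq 0$ and $e_i^\top \inv{Q} \geq 0$. Under orthogonality, the second condition becomes $e_i^\top Q^\top \geq 0$, i.e., $(Qe_i)^\top \geq 0$, which is identical to the first condition.

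Concretely, I would first write out the definition explicitly and then substitute $\inv{Q} = Q^\top$, observing that the vector $e_i^\top \inv{Q}$ is exactly the transpose of the column vector $Qe_i$. Hence the two nonnegativity requirements coalesce into one, and the corollary follows immediately. One could alternatively invoke the previous corollary: if $q_1,\dots,q_n$ denote the rows of $Q$, then the rows of $\inv{(Q^\top)} = Q$ are also $q_1,\dots,q_n$, so the two conical hull membership conditions $e_i \in \coni{q_1,\dots,q_n}$ coincide, which gives the same conclusion.

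The only \emph{possible} obstacle is to be careful about row versus column conventions when reading $e_i^\top \inv{S}$ in the definition; once that bookkeeping is correctly done, there is no real content beyond the identity $\inv{Q} = Q^\top$. Accordingly, the proof will be essentially a one-line observation rather than a substantive argument.
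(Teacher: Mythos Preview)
Your proposal is correct and matches the paper's implicit reasoning: the paper states this corollary without proof, treating it as immediate from the definition (or from the preceding corollary) together with the identity $\inv{Q}=Q^\top$, which is exactly the observation you make.
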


\begin{rem}
It is well-known that a nonnegative matrix $A$ is stochastic if and only if $Ae = e$ (\cite[\S 8.7, Problem 4]{hj1990}). Thus, $A$ is doubly stochastic if and only if $Ae = e$ and $e^\top A = e^\top$. If there are real scalars $\alpha$ and $\beta$ such that $Se_i = \alpha e$ and $e_i^\top \inv{S} = \beta e^\top$, then $\tope{S}$ contains spectra that are doubly stochastically realizable; however, notice that the converse does not hold: for example, if 
\begin{equation*}
S= 
\begin{bmatrix}
1 & 1 & 0	\\
1 & -1 & 0	\\
0 & 0 & 1	
\end{bmatrix},
\end{equation*}
and $v = \begin{bmatrix} 1 & \lambda & 1 \end{bmatrix}^\top$, where $\lambda \in [-1,1]$, then $M = S D_v \inv{S}$ is doubly stochastic. 

If $Q \in \og{n}$, then $\tope{Q}$ contains spectra that are symmetrically, doubly stochastically realizable if and only if $Qe_i = e$ and $e_i^\top Q = e^\top$.   
\end{rem}

\begin{ex}
Although the nonsingular matrix
\[
S =
\begin{bmatrix}
1 & 1/2 	\\
1 & 1
\end{bmatrix}
\]
has two positive columns, it is not a Perron-similarity; indeed, note that 
\[
\inv{S} =
\left[ \begin{array}{rr}
2 & -1 	\\
-2 & 2
\end{array} \right].
\]
\end{ex}

\begin{rem}
We briefly digress to make the following observation: recall that a nonsingular $M$-matrix is any matrix of the form $S = \alpha I - T$, where $T \geq 0$ and $\alpha > \sr{T}$. It is well-known that $S$ is an $M$-matrix if and only if $S^{-1} \geq 0$ (Plemmons lists forty characterizations in \cite{p1977}). Thus, following Theorem \hyperref[thm:nnrv]{\ref*{thm:nnrv}}, $S$ is an $M$-matrix if and only if $e_i \in \coni{s_1,\dots,s_n}$ for every $i \in \bracket{n}$. 
\end{rem}

\begin{cor}
If $S \in \gl{n}{\bb{R}}$, then $\cone{S} \backslash \coni{e} \neq \emptyset$ if and only if $S$ is a Perron-similarity. 
\end{cor}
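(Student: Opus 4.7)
The plan is to treat the two implications separately; the reverse direction follows immediately from the definition, while the forward direction rests on \hyp{Theorem}{pft} together with a spectral-projector argument.

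For the easy direction, suppose $S$ is a Perron-similarity with witnessing index $i \in \bracket{n}$, so that $Se_i \geq 0$ and $e_i^\top \inv{S} \geq 0$. Setting $x := e_i$, one computes $S D_x \inv{S} = (Se_i)(e_i^\top \inv{S})$, which is the outer product of two nonnegative vectors and hence entrywise nonnegative. Therefore $e_i \in \cone{S}$; and provided $n \geq 2$, $e_i$ is not a nonnegative scalar multiple of $e$, furnishing an element of $\cone{S} \setminus \coni{e}$.

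For the harder direction, take any $x \in \cone{S} \setminus \coni{e}$ and set $A := S D_x \inv{S} \geq 0$. The first step is to verify that $A$ is nonscalar: if $A = \alpha I$ then invertibility of $S$ forces $D_x = \alpha I$, hence $x = \alpha e$, and the constraint $A \geq 0$ gives $\alpha \geq 0$, placing $x$ in $\coni{e}$ against hypothesis. Thus $A$ is nonnegative and nonscalar, and \hyp{Theorem}{pft}, applied both to $A$ and to $A^\top$, supplies $\rho := \sr{A} \in \sig{A}$ together with nonnegative right and left Perron eigenvectors. Since $\sig{A} = \{x_1, \ldots, x_n\}$, one may choose $i$ with $x_i = \rho$, and for such an $i$ the column $Se_i$ is a right $\rho$-eigenvector of $A$ while the row $e_i^\top \inv{S}$ is a left $\rho$-eigenvector.

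The final and most delicate step is to align these vectors with the nonnegative Perron eigenvectors. When $\rho$ is simple in $\sig{A}$ the one-dimensional $\rho$-eigenspace is spanned by $Se_i$, forcing $Se_i$ to be proportional to a nonnegative vector and hence signed, with the analogous conclusion for $e_i^\top \inv{S}$. The rank-one spectral projector $(Se_i)(e_i^\top \inv{S})$ may be obtained as a Ces\`aro limit of $\rho^{-k} A^k$ and is therefore entrywise nonnegative, which in turn forces the two signed factors to carry matching signs; a simultaneous negation via part (v) of \hyp{Proposition}{propcone} then exhibits $S$ as a Perron-similarity. The main obstacle I anticipate is the case in which $\rho$ has multiplicity greater than one in $\sig{A}$, for then the spectral projector is a sum of rank-one pieces and no individual column $Se_i$ need be signed; to handle this I would first attempt to perturb $x$ within $\cone{S} \setminus \coni{e}$ so that its coordinates become pairwise distinct (reducing to the simple-$\rho$ case), and failing that invoke \hyp{Proposition}{propcartprod} to reduce to the indecomposable constituents of $S$ and argue by induction on $n$.
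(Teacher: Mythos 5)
Your reverse direction is exactly the paper's: take $x=e_i$, note $SD_{e_i}\inv{S}=(Se_i)(e_i^\top\inv{S})\geq 0$, and observe that $e_i\notin\coni{e}$ (your caveat that this needs $n\geq 2$ is a point the paper omits). For the forward direction, the paper's entire argument is the sentence ``the result now follows from Theorem \ref{pft},'' which tacitly assumes that the nonnegative eigenvector supplied by Perron--Frobenius is proportional to a column of $S$. That is true precisely when $\rho$ is a geometrically simple eigenvalue of $A=SD_x\inv{S}$, and in that regime your Ces\`aro-limit/spectral-projector argument is a correct and considerably more careful completion than anything in the paper (the nonnegative rank-one projector $(Se_i)(e_i^\top\inv{S})$ with $(e_i^\top\inv{S})(Se_i)=1>0$ does force both factors to be signed, with matching signs).

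The obstacle you flag in the multiple-$\rho$ case, however, is not one either of your repairs can remove, because the implication itself fails there. Take
\[
S=\begin{bmatrix} 1 & 1 & 1 & 0\\ 1 & 1 & -1 & 0\\ -1 & -2 & 0 & 1\\ -1 & -2 & 0 & -1\end{bmatrix},
\qquad x=\begin{bmatrix}1 & 1 & -1 & -1\end{bmatrix}^\top .
\]
The first two columns span $\{(a,a,b,b)^\top\}$ and the last two span $\{(a,-a,b,-b)^\top\}$, so $SD_x\inv{S}$ is the nonnegative permutation matrix interchanging $e_1\leftrightarrow e_2$ and $e_3\leftrightarrow e_4$; hence $x\in\cone{S}\setminus\coni{e}$. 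Yet every column of $S$ has entries of both signs, so no index $i$ satisfies $Se_i\geq 0$ and $S$ is not a Perron-similarity under the paper's definition. This also rules out your proposed fixes: since no column of $S$ is signed, your own simple-$\rho$ argument shows that no point of $\cone{S}$ can realize a matrix with geometrically simple Perron root, so there is nothing to perturb toward; and $S$ is not a direct sum, so Proposition \ref{propcartprod} does not apply. The corollary (and the paper's proof) is correct only under the additional hypothesis that some $x\in\cone{S}\setminus\coni{e}$ yields an $A$ whose Perron root is geometrically simple --- for instance, an irreducible $A$. Your write-up, unlike the paper's, at least makes the missing hypothesis visible.
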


\begin{proof}
If $S$ is a Perron-similarity, then there is an $i \in \bracket{n}$ such that the vectors $x := Se_i$ and $y^\top := e_i^\top S$ are nonnegative. Thus, $S D_{e_i} \inv{S} = xy^\top \geq 0$, so that $e_i \in \cone{S}$.

Conversely, if $\cone{S} \backslash \coni{e} \neq \emptyset$, then then there is a vector $x \neq e$ such that $S D_x \inv{S} \geq 0$. The result now follows from \hyperref[pft]{\ref*{pft}}.     
\end{proof}

\section{RNIEP and SNIEP for Low Dimensions}

In this section, we verify the known necessary and sufficient conditions for the SNIEP and RNIEP when $n \leq 4$. 

For $n \geq 2$, notice that, following \eqref{trnn} and \eqref{sprad}, 
\[ \bigcup_{S \in \gl{n}{\bb{R}}} \mathcal{P}^1(S) \subseteq \mathcal{T}^{n-1}, \]
where $\mathcal{T}^{n-1} := \{ x \in \mathcal{B}^{n-1}: 1 + e^\top x \geq 0 \}$. The region $\mathcal{T}^{n-1}$ is known as the \emph{trace-nonnegative polytope} \cite{kn2001}.

\begin{thm}
If $\sigma = \{ \lambda_1, \dots, \lambda_n \}$ and $n \leq 4$, then $\sigma$ is realizable if and only if $\sigma$ satisfies \eqref{trnn} and \eqref{sprad}. Futhermore, the realizing matrix can be taken to be symmetric.
\end{thm}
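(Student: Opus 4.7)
The plan is to establish the two directions separately. Necessity is immediate: the conditions \eqref{trnn} and \eqref{sprad} were recorded in the introduction as necessary for any nonnegative realization. For sufficiency I would exhibit, for each $n \in \{1,2,3,4\}$ and each normalized spectrum $\sigma$ satisfying the hypotheses, an orthogonal matrix $Q$ (or a direct sum thereof) whose Perron spectratope $\tope{Q}$ contains $(1,\lambda_2,\ldots,\lambda_n)$. Orthogonality of $Q$ makes $Q D_{(1,\lambda_2,\ldots,\lambda_n)} Q^\top$ symmetric, simultaneously producing the symmetric realizing matrix claimed.

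For $n = 2$, the scaled Walsh matrix $Q_2 = \tfrac{1}{\sqrt{2}} \bigl[\begin{smallmatrix} 1 & 1 \\ 1 & -1 \end{smallmatrix}\bigr]$ suffices: a direct calculation shows $Q_2 D_{(1,\lambda)} Q_2^\top \geq 0$ if and only if $\lambda \in [-1,1]$, so $\tope{Q_2} = \mathcal{T}^1$. For $n = 3$ I would split into cases: when $\max(\lambda_2,\lambda_3) \geq 0$, Proposition~\ref{propcartprod} reduces the problem to the $n = 2$ case by direct-summing a Walsh realization of an appropriate pair with a nonnegative singleton block; when both $\lambda_2, \lambda_3 < 0$, the trace condition confines the spectrum to a small region (containing the vertex $\{1,-1/2,-1/2\}$, realized universally by $\tfrac{1}{2}(J-I)$), which can be covered by the Perron spectratope of a suitable orthogonal matrix with first column $e/\sqrt{3}$.

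The main case is $n = 4$, which I handle by combining the Walsh matrix $H_4/2$ with a direct-sum construction. A direct computation shows $\tope{H_4/2}$ is cut out by the four inequalities $1 \pm \lambda_2 \pm \lambda_3 \pm \lambda_4 \geq 0$ taken with an even number of minus signs. Under the ordering $1 \geq \lambda_2 \geq \lambda_3 \geq \lambda_4 \geq -1$, three of these hold automatically: the ``all-plus'' one is $s_1 \geq 0$, while $1 - \lambda_2 + \lambda_3 - \lambda_4 \geq 0$ follows from $\lambda_2 \leq 1$ together with $\lambda_3 \geq \lambda_4$, and $1 + \lambda_2 - \lambda_3 - \lambda_4 \geq 0$ follows from $\lambda_2 \geq \lambda_3$ together with $\lambda_4 \leq 1$. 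Only $1 - \lambda_2 - \lambda_3 + \lambda_4 \geq 0$ can fail. When it holds, $H_4/2$ realizes $\sigma$ symmetrically; when it fails, $\lambda_2 + \lambda_3 > 1 + \lambda_4 \geq 0$, so the pairing $\{1,\lambda_4\},\{\lambda_2,\lambda_3\}$ has nonnegative pair-sums, and two copies of the $n = 2$ construction combined block-diagonally via Proposition~\ref{propcartprod} yield a symmetric nonnegative realization.

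The main obstacle I anticipate is the bookkeeping at $n = 4$: verifying that the three ``automatic'' Walsh inequalities genuinely follow from the ordering, and that when the fourth fails the pairing $\{1,\lambda_4\},\{\lambda_2,\lambda_3\}$ is always workable. Both reductions amount to short inequality manipulations using only the ordering and $s_1 \geq 0$. A parallel but easier bookkeeping handles the subcases at $n = 3$, where the geometry of the relevant two-dimensional region is simple enough that the case split above exhausts it.
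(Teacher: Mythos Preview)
Your $n=2$ and $n=4$ arguments are correct and match the paper's: the same similarities ($H_1$; and $H_1\oplus H_1$ together with $H_1\otimes H_1$) are used, the only difference being how you organize the case split at $n=4$.

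The gap is at $n=3$, in the subcase $\lambda_2,\lambda_3<0$. An orthogonal $Q$ with first column $e/\sqrt{3}$ produces a \emph{symmetric doubly stochastic} matrix $QD_vQ^\top$, so your proposal can only reach spectra that are realizable in that class. But not every normalized spectrum in the ``both negative'' triangle is: take $\sigma=\{1,-\tfrac{1}{10},-\tfrac{9}{10}\}$, which has $s_1(\sigma)=0$. A trace-zero $3\times3$ symmetric doubly stochastic matrix must have zero diagonal, and the row-sum and symmetry constraints then force it to be $\tfrac12(J-I)$, whose spectrum is $\{1,-\tfrac12,-\tfrac12\}$. So $\sigma$ is not reachable by any $Q$ of the form you describe. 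More structurally, the lemma immediately preceding Corollary~\ref{cor:uncount} shows that the spectracone of \emph{any} $3\times3$ Perron similarity meets the trace-zero hyperplane in at most one ray; hence no single similarity (orthogonal or otherwise) can cover that triangle, and no family with first column fixed at $e/\sqrt{3}$ can either.

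The paper resolves this subcase with a genuinely different device: a one-parameter family of \emph{non-orthogonal} similarities
\[
S_a=\begin{bmatrix}1&1&0\\ 1&-a&1\\ 1&-a&-1\end{bmatrix},\qquad a\in[0,1],
\]
whose projected spectratopes sweep across the missing region as $a$ varies. The matrices $S_aD_vS_a^{-1}$ are row-stochastic but not symmetric; a subsequent diagonal conjugation by $D_u$ with $u=[1,\sqrt{2a},\sqrt{2a}]^\top$ symmetrizes them. The resulting symmetric realizers are \emph{not} doubly stochastic, which is precisely what allows them to hit points like $\{1,-\tfrac{1}{10},-\tfrac{9}{10}\}$. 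Your outline would be repaired by replacing the ``first column $e/\sqrt{3}$'' claim with this $S_a$ family (or any equivalent one-parameter family whose Perron vector varies).
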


\begin{proof} We give a proof for $2 \leq n \leq 4$, given that the result is trivial when $n=1$ (however, note that the similarity $H_0 := 1$ yields all possible spectra).   

\emph{Case $n =2$}.  Figure \hyperref[rniepnequalstwo]{\ref*{rniepnequalstwo}} depicts the spectrahedral sets for the matrix
\[
H_1 =
\begin{bmatrix}
1 & 1 	\\
1 & -1
\end{bmatrix},
\]
which are established in Theorem \hyperref[walshcone]{\ref*{walshcone}} and Corollary \hyperref[walshspec]{\ref*{walshspec}}. This solves the SNIEP and RNIEP when $n=2$, since $\mathcal{P}^1(H_1) = \mathcal{B}^1 = \mathcal{T}^1$ and the realizing matrix is 
\[ 
\frac{1}{2}
\begin{bmatrix} 
\lambda_1+\lambda_2 & \lambda_1 - \lambda_2 \\ 
\lambda_1 - \lambda_2 & \lambda_1 + \lambda_2 
\end{bmatrix},~\lambda_1 \geq \lambda_2. \]

\begin{figure}[H]
\centering
\begin{tikzpicture}
\begin{axis}[
xticklabel style={font=\scriptsize},
yticklabel style={font=\scriptsize},
xmin=-2,
xmax=2,
ymin=-2,
ymax=2,
y label style={rotate=-90},
xlabel=$x_1$,
ylabel=$x_2$]

\addplot[domain=0:1, name path=A] {x} {};
\addplot[domain=0:1, name path=B] {-x}{};
\addplot[Gray!50] fill between[of=A and B];
\node[anchor=center] at (.6,0) {$\mathcal{W}(H_1)$};

\addplot[domain=1:2, name path=C] {x} {};
\addplot[domain=1:2, name path=D] {-x}{};
\addplot[Gray!25] fill between[of=C and D];
\node[anchor=center] at (1.6,0) {$\mathcal{C}(H_1)$};

\addplot[very thick] coordinates {(1,1) (1,-1)};
\node[pin=225:$\mathcal{P}(H_1)$] at (1.05,-.95) {};

\addplot[very thick] coordinates {(0,-1) (0,1)};
\node[pin=135:$\mathcal{P}^1(H_1)$] at (.05,.95) {};
\end{axis}
\end{tikzpicture}
\caption{RNIEP \& SNIEP for $n=2$.}
\label{rniepnequalstwo}
\end{figure}
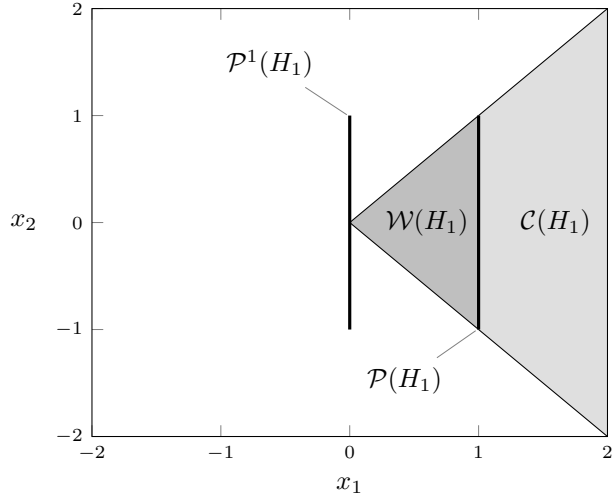

\emph{Case $n = 3$}. Let 
\[ 
S := H_1 \oplus H_0 = 
\begin{bmatrix}
1 & 1 & 0	\\
1 & -1 & 0	\\
0 & 0 & 1	
\end{bmatrix}
~\mbox{and}~ 
P :=
\begin{bmatrix}
1 & 0 & 0	\\
0 & 0 & 1	\\
0 & 1 & 0
\end{bmatrix}. \]
For $a \in [0,1]$, let $b := 1 - a$ and
\[
S_a := 
\begin{bmatrix}
1 & 1 & 0	\\
1 & -a & 1	\\
1 & -a & -1	
\end{bmatrix}.\]
\hyp{Figure}{rniepnequalsthree} depicts the projected Perron spectratopes for the matrices $S$, $SP$, and $S_a$.

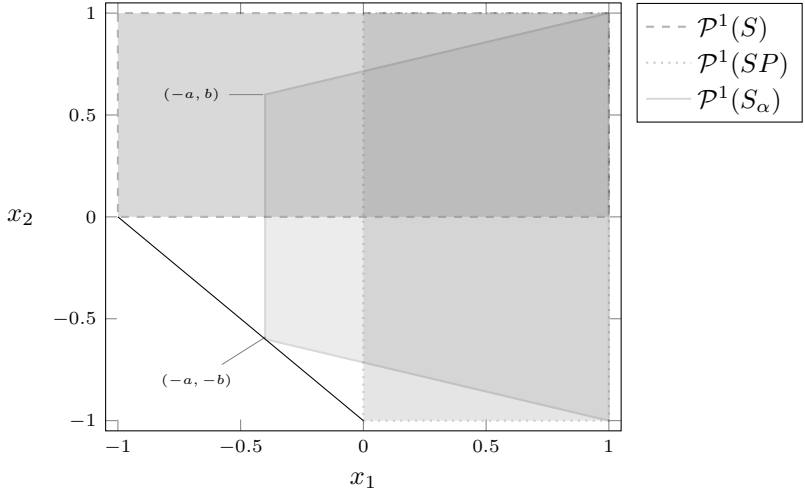
\begin{figure}[H]
\centering
\begin{tikzpicture}
\begin{axis}[
xticklabel style={font=\scriptsize},
yticklabel style={font=\scriptsize},
xmin=-1.05,
xmax=1.05,
ymin=-1.05,
ymax=1.05,
y label style={rotate=-90},
xlabel=$x_1$,
ylabel=$x_2$,
legend entries={$\mathcal{P}^1(S)$,$\mathcal{P}^1(SP)$,$\mathcal{P}^1(S_\alpha)$},
legend style={nodes=right},
legend pos= outer north east]

\addplot[thick,dashed,fill=Gray,opacity=0.3] coordinates{(1,1) (-1,1) (-1,0) (1,0)(1,1)};

\addplot[thick,dotted, fill=Gray,opacity=0.2] coordinates{(1,1) (1,-1)(0,-1)(0,1)(1,1)};

\addplot[thick, fill=Gray,opacity=0.15] coordinates{(1,1) (-.4,.6)(-.4,-.6)(1,-1)(1,1)};
\node[pin=225:{\tiny $(-a,-b)$}] at (-.36,-.56) {};
\node[pin=180:{\tiny $(-a,b)$}] at (-.37,.6) {};

\addplot[black] coordinates{(-1,0)(0,-1)};
\end{axis}
\end{tikzpicture}
\caption{RNIEP \& SNIEP for $n=3$.}
\label{rniepnequalsthree}
\end{figure}

A straightforward calculation reveals that if $v = [1~x~y]^\top$, then the matrix $S_a D_v \inv{S_a}$ is given by
\[ 
\frac{1}{2(1+a)}
\begin{bmatrix}
2(x + a) &             1 - x &             1 - x    			\\
2a(1 - x) & a(x + y) + y + 1 & a(x - y) - y + 1    \\
2a(1 - x) & a(x - y) - y + 1 & a(x + y) + y + 1
\end{bmatrix}. \]
Furthermore, if $u = [1~\sqrt{2a}~\sqrt{2a}]^\top$, then the matrix $\inv{D_u} S_a D_v \inv{S_a} D_u$  is given by
\[ 
\frac{1}{2(1+a)}
\begin{bmatrix}
2(x + a) &             \sqrt{2a}(1 - x) &             \sqrt{2a}(1 - x)    			\\
\sqrt{2a}(1 - x) & a(x + y) + y + 1 & a(x - y) - y + 1    \\
\sqrt{2a}(1 - x) & a(x - y) - y + 1 & a(x + y) + y + 1
\end{bmatrix}, \]
thus, the realizing matrix can by taken to be symmetric.

\emph{Case $n=4$}. Without loss of generality, assume that $\lambda_1 \geq \lambda_2 \geq \lambda_3 \geq \lambda_4$. If $v := [\lambda_1~\lambda_4~\lambda_2~\lambda_3]^\top$ and $S := H_1 \oplus H_1$, then
\[ S D_v \inv{S}
=
\frac{1}{2} 
\begin{bmatrix}
\lambda_1+\lambda_4 & \lambda_1 - \lambda_4 & 0 & 0 	\\ 
\lambda_1 - \lambda_4 & \lambda_1 + \lambda_4 & 0 & 0 	\\
0 & 0 & \lambda_2+\lambda_3 & \lambda_2 - \lambda_3 	\\ 
0 & 0 & \lambda_2 - \lambda_3 & \lambda_2 + \lambda_3
\end{bmatrix}. \]
Thus, $\cone{S}$ contains all spectra such that 
\begin{enumerate}[label=(\roman*)]
\item $\lambda_1 \geq \lambda_2 \geq \lambda_3 \geq \lambda_4 \geq 0$; 
\item $\lambda_1 \geq \lambda_2 \geq \lambda_3 \geq 0 > \lambda_4$; or 
\item $\lambda_1 \geq \lambda_2 \geq 0 > \lambda_3 \geq \lambda_4$, and $\lambda_2 + \lambda_3 \geq 0$.
\end{enumerate}

If $H_2 := H_1 \otimes H_1$, then $H_2 D_v \inv{H_2}$ equals 
\[ 
\frac{1}{4}
\begin{bmatrix}
\lambda_1 + \lambda_4 + \lambda_2 + \lambda_3 & 
\lambda_1 - \lambda_4 + \lambda_2 - \lambda_3 & 
\lambda_1 + \lambda_4 - \lambda_2 - \lambda_3 & 
\lambda_1 - \lambda_4 - \lambda_2 + \lambda_3 	\\ 
\lambda_1 - \lambda_4 + \lambda_2 - \lambda_3 & 
\lambda_1 + \lambda_4 + \lambda_2 + \lambda_3 & 
\lambda_1- \lambda_4 - \lambda_2 + \lambda_3 & 
\lambda_1 + \lambda_4 - \lambda_2 - \lambda_3 	\\ 
\lambda_1 + \lambda_4 - \lambda_2 - \lambda_3 & 
\lambda_1 - \lambda_4 - \lambda_2 + \lambda_3 & 
\lambda_1 + \lambda_4 + \lambda_2 + \lambda_3 & 
\lambda_1 - \lambda_4 + \lambda_2 - \lambda_3 	\\ 
\lambda_1 - \lambda_4 - \lambda_2 + \lambda_3 & 
\lambda_1 + \lambda_4 - \lambda_2 - \lambda_3 & 
\lambda_1 - \lambda_4 + \lambda_2 - \lambda_3 & 
\lambda_1 + \lambda_4 + \lambda_2 + \lambda_3 	
\end{bmatrix}. \]
Thus, $\cone{S}$ contains all spectra such that 
\begin{enumerate}[label=(\roman*)]
\item $\lambda_1 \geq \lambda_2 \geq 0 > \lambda_3 \geq \lambda_4$, and $\lambda_2 + \lambda_3 < 0$; or 
\item $\lambda_1 \geq 0 > \lambda_2 \geq \lambda_3 \geq \lambda_4$.
\end{enumerate}
\hyp{Figure}{hadspectwo} depicts the projected Perron spectratope of $H_2$, as established in \hyp{Corollary}{walshspec}.
\end{proof}

\begin{figure}[H]
\centering
\begin{tikzpicture}
\begin{axis}
[view={-35}{20},
xmin=-1.05,
xmax=1.05,
ymin=-1.05,
ymax=1.05,
zmin=-1.05,
zmax=1.05,
z label style={rotate=-90},
xlabel=$x_1$,
ylabel=$x_2$,
zlabel=$x_3$]

\addplot3[thick,fill=Gray,opacity=.4] coordinates{(-1,-1,1) (-1,1,-1) (1,-1,-1) (-1,-1,1)};
\addplot3[thick,fill=Gray,opacity=.2] coordinates{(1,1,1) (-1,-1,1) (1,-1,-1) (1,1,1)};
\addplot3[thick,fill=Gray,opacity=.1] coordinates{(1,1,1) (-1,-1,1) (-1,1,-1) (1,1,1)};

\addplot3[Gray] coordinates{(-1,0,0) (0,-1,0) (0,0,-1) (-1,0,0)}; 
\addplot3[Gray,dashed] coordinates{(0,0,0) (-1,0,0)};
\addplot3[Gray,dashed] coordinates{(0,0,0) (0,-1,0)};
\addplot3[Gray,dashed] coordinates{(0,0,0) (0,0,-1)};
\node[pin=245:{\tiny Sule\u{\i}manova}] at (.1,.1,-.9) {};
\end{axis}
\end{tikzpicture}
\caption{$\mathcal{P}^1(H_2)$.}
\label{hadspectwo}
\end{figure}
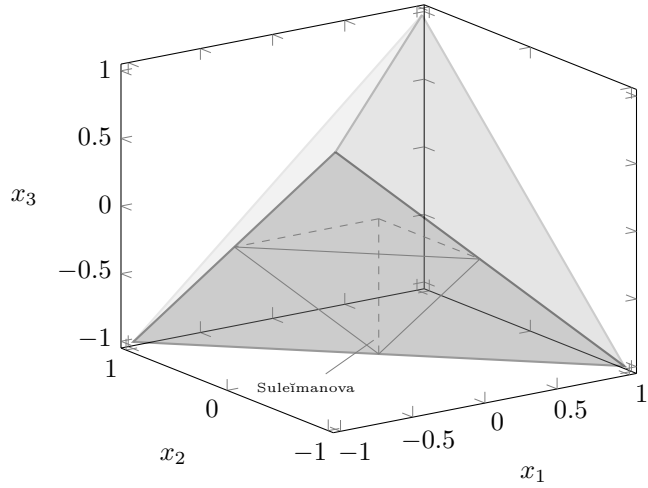

\hyp{Figure}{rniepnequalsthree} suggests that the trace-nonnegative polytope $\mathcal{T}^{2}$ can not be covered by countably many projected Perron spectratopes; this can be proven via the relative gain array.

\begin{lem}
Suppose that $S$ is a $3$-by-$3$ Perron-similarity. If $x$, $y \in \cone{S}$ such that $e^\top x = e^\top y = 0$, then $x = y$.  
\end{lem}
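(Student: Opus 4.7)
The plan is to reduce the conclusion to a linear-algebraic question about the relative gain array $\Phi(S)$ and then exploit that $n = 3$. Suppose $z \in \cone{S}$ with $e^\top z = 0$, and set $A := S D_z \inv{S}$. Since $A \geq 0$ and $\trace{A} = e^\top z = 0$, every (nonnegative) diagonal entry of $A$ must vanish, so $\Diag{A} = 0$. The relative gain array identity \eqref{rgadiag} then gives $\Phi(S) z = 0$. Applied to both $x$ and $y$, this places them in $\ker \Phi(S)$, so $x - y \in \ker \Phi(S)$.

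The next step is to bound this kernel. A direct check gives $\Phi(S) e = \Diag{I} = e$ and $e^\top \Phi(S) = e^\top$, so $\rk{\Phi(S)} \geq 1$; and any rank-$1$ $\Phi(S)$ with these left and right $1$-eigenvectors must equal $\tfrac{1}{3} J$. I would rule this degenerate case out for any real $3 \times 3$ invertible $S$: the entrywise equations $S_{ij}(\inv{S})_{ji} = \tfrac{1}{3}$, combined with $S \inv{S} = I$, force, for any $i \neq j$, the numbers $a_k := S_{ik}/S_{jk}$ to satisfy $a_1 + a_2 + a_3 = 0$ and $1/a_1 + 1/a_2 + 1/a_3 = 0$, i.e.\ the first two elementary symmetric functions both vanish, so the $a_k$ are the three cube roots of $a_1 a_2 a_3$; the only real solution is $a_1 = a_2 = a_3 = 0$, contradicting $S_{ik} \neq 0$ (which is itself forced by $\Phi(S)_{ij} = 1/3$). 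Hence $\rk{\Phi(S)} \geq 2$ and $\dim \ker \Phi(S) \leq 1$, so $x$ and $y$ are scalar multiples of a common direction vector $v \in \ker \Phi(S)$.

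Finally, the scalar is fixed by the cone condition and the Perron-similarity hypothesis: the nontrivial intersection $\mathbb{R} v \cap \cone{S}$ is one of the two rays on $\mathbb{R} v$, so the orientation is determined, and the implicit normalization of $x$ and $y$ available in the spectratope context (the coordinate pinned down by the nonnegative column of $S$ and corresponding row of $\inv{S}$ guaranteed by $S$ being a Perron-similarity) fixes the positive scalar uniquely, yielding $x = y$.

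The main obstacle is the rank lower bound in the middle step: ruling out $\Phi(S) = \tfrac{1}{3} J$ via the cube-root-of-unity argument that forces invertibility failure. The first step (trace-zero plus nonnegativity $\Rightarrow \Phi(S) z = 0$) is essentially immediate from the RGA identity, and the final normalization is bookkeeping once $\ker \Phi(S)$ is known to be at most one-dimensional; all of the substance of the hypotheses "$n = 3$'' and "$S$ is a Perron-similarity'' is concentrated in the kernel bound.
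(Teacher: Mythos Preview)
Your approach mirrors the paper's: push $x,y$ into $\ker\Phi(S)$ via the trace-zero/nonnegativity observation and \eqref{rgadiag}, then show $\dim\ker\Phi(S)\leq 1$ by ruling out $\Phi(S)=\tfrac{1}{3}J$. The paper dispatches that last point by citing \cite[\S5, Example~2]{js1986}; your cube-root-of-unity argument is a correct self-contained substitute (from $(S^{-1})_{kj}=1/(3S_{jk})$ and $(SS^{-1})_{ij}=0$ one indeed gets $\sum_k S_{ik}/S_{jk}=0$ for every ordered pair $i\neq j$, hence $e_1(a)=e_2(a)=0$, and three \emph{real} roots of $t^3=a_1a_2a_3$ force $a_k=0$), and is essentially the computation underlying that cited example.

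Your closing paragraph, however, misallocates the hypotheses. The Perron-similarity assumption plays no role in the rank bound (that bound holds for every $S\in\gl{3}{\bb{R}}$), nor does it supply a normalization: since $\cone{S}$ is a cone, if $0\neq x\in\cone{S}$ with $e^\top x=0$ then so is every $\alpha x$ with $\alpha>0$, and no ``Perron coordinate'' reasoning changes that. The lemma is really only true if one reads $\tope{S}$ in place of $\cone{S}$, so that $x_1=y_1=1$ pins the scalar; this is the normalization actually used downstream in \hyp{Corollary}{cor:uncount}. The paper's own proof has the same lacuna---it jumps from ``$x\neq y$'' to ``$\rk{\Phi(S)}=1$'', which requires linear independence, not mere inequality---so you have not introduced a new error, but your attempt to locate the missing normalization in the Perron-similarity hypothesis is not right.
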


\begin{proof}
If $x \neq y$, then $\rk{\Phi(S)} = 1$; hence 
\[ \Phi(S) = 
\begin{bmatrix}
a & b & c 		\\
ka & kb & kc 		\\
\ell a & \ell b & \ell c
\end{bmatrix}. \]
Since $\Phi(S)$ has row and column sums equal to one (\cite[\S2, Observation 1]{js1986}), it follows that $k = \ell = 1$, and $a = b = c = 1/3$; however, as noted in \cite[\S5, Example 2]{js1986}, the equation $\Phi(X) = (1/3)J$ has no real solutions.
\end{proof}

\begin{cor}
\label{cor:uncount}
If $\{ S_\alpha \}_{\alpha \in \mathcal{I}}$ is a collection of invertible $3$-by-$3$ matrices such that 
\[ \bigcup_{\alpha \in \mathcal{I}} \mathcal{P}^1(S_\alpha) = \mathcal{T}^{2}, \]
then $\mathcal{I}$ is uncountable.  
\end{cor}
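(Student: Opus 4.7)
The plan is to isolate an uncountable subset $F$ of $\mathcal{T}^2$ that every individual $\mathcal{P}^1(S_\alpha)$ can meet in at most one point; a covering of $F$ by the family $\{\mathcal{P}^1(S_\alpha)\}$ then forces $\mathcal{I}$ to be uncountable. The natural candidate is the face of $\mathcal{T}^2$ on which the trace-nonnegativity inequality is tight, namely
\[ F := \{ y \in \mathcal{T}^2 : 1 + e^\top y = 0 \}, \]
which for $n-1 = 2$ is the closed segment joining $(-1,0)$ and $(0,-1)$ in $\bb{R}^2$. Plainly $F \subseteq \mathcal{T}^2$ and $F$ has the cardinality of the continuum.

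Next I would verify that $\lvert \mathcal{P}^1(S) \cap F \rvert \leq 1$ for every $S \in \gl{3}{\bb{R}}$. If $S$ is not a Perron-similarity, the corollary preceding the present statement gives $\cone{S} = \coni{e}$, whence $\tope{S} \subseteq \{e\}$ and $\mathcal{P}^1(S) \subseteq \{(1,1)\}$, which is disjoint from $F$. If $S$ is a Perron-similarity, then any $y \in \mathcal{P}^1(S) \cap F$ lifts via $\pi_1$ to a vector $x := (1, y_2, y_3)^\top \in \tope{S} \subseteq \cone{S}$, and membership of $y$ in $F$ translates into $e^\top x = 1 + y_2 + y_3 = 0$. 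For two such points $y, y' \in \mathcal{P}^1(S) \cap F$ the lifts $x, x' \in \cone{S}$ both satisfy $e^\top x = e^\top x' = 0$, and the preceding lemma forces $x = x'$, so $y = y'$.

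Putting these pieces together, the identity
\[ F = \bigcup_{\alpha \in \mathcal{I}} \left( \mathcal{P}^1(S_\alpha) \cap F \right) \]
(which follows from the covering hypothesis $\bigcup_\alpha \mathcal{P}^1(S_\alpha) = \mathcal{T}^2$) exhibits the uncountable set $F$ as a union indexed by $\mathcal{I}$ of sets each of cardinality at most one; it follows that $\mathcal{I}$ itself is uncountable. The argument is short, and the only point requiring any care is the Perron-versus-non-Perron dichotomy, which is dispatched cleanly by the corollary characterizing when $\cone{S} \setminus \coni{e}$ is nonempty; I anticipate no significant technical obstacle.
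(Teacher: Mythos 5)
Your proof is correct and is precisely the argument the paper intends: the trace-zero face of $\mathcal{T}^2$ is uncountable, and the preceding lemma (applied to the lifts in $\tope{S_\alpha}\subseteq\cone{S_\alpha}$) shows each $\mathcal{P}^1(S_\alpha)$ meets that face in at most one point, so countably many spectratopes cannot cover it. The paper leaves the corollary's deduction from the lemma implicit, and your write-up (including the non-Perron-similarity case, which the paper glosses over) fills it in correctly.
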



\section{Perron Spectratopes of Hadamard Matrices}
\label{hadspec}

Recall that $H$ is a \emph{Hadamard matrix} if $h_{ij} \in \{ \pm 1\}$ and $H H^\top = nI$. If $n$ is a positive integer $n$ such that there is a Hadamard matrix of order $n$, then $n$ is called a \emph{Hadamard order}. The longstanding \emph{Hadamard conjecture} asserts that there is a Hadamard matrix of order $4k$ exists for every $k \in \bb{N}$.

Let $H_0 = [1]$, and for $n \in \bb{N}$, let
\begin{align}
H_n := 
 H_1 \otimes H_{n-1}
=
\left[
\begin{array}{rr}
H_{n-1} & H_{n-1} \\
H_{n-1} & -H_{n-1}
\end{array} \right]
\in \mat{2^n}{\bb{R}}.				\label{hadmtrx}
\end{align}
It is well-known that $H_n$ is a Hadamard matrix for every $n \in \bb{N}_0$, and the construction given in \eqref{hadmtrx} is known as the \emph{Sylvester construction}, and the matrix $H_n$, $n \in \bb{N}_0$ is called the \emph{canonical Hadamard}, or \emph{Walsh} \emph{matrix} of order $2^n$ (for brevity, and given that \emph{Sylvester matrix} is reserved for another matrix, we will use the latter term). 

Walsh matrices satisfy the following additional well-known properties:
\begin{enumerate}[label=(\roman*)]
\item $H_n = H_n^\top$; 
\item $\inv{H_n} = 2^{-n} H_n$;
\item $\trace{H_n} = 0$, $n \geq 1$;  
\item $e_1^\top H_n = e^\top$; 
\item $H_n e_1 =  e$; and
\item $e^\top H_n = ne_1$.
\end{enumerate}
Lastly, note that $H_n$ is a Perron similarity for every $n$. 

\begin{prop} \label{propas}
Let 
\[ 
P_{11} := 
\begin{bmatrix}
1 & 0 \\
0 & 1
\end{bmatrix} \mbox{ and }
P_{12} := 
\begin{bmatrix}
0 & 1 \\
1 & 0
\end{bmatrix}. \] 
For $n \geq 2$, let 
\[ P_{nk} :=
\left\{ 
\begin{array}{rl}
P_{11} \otimes P_{(n-1)k} = 
\begin{bmatrix} 
P_{(n-1)k} & 0 \\
0 & P_{(n-1)k} 
\end{bmatrix} \in \mat{2^n}{\bb{R}}, & k \in \bracket{2^{n-1}}						\\ \\
P_{12} \otimes P_{(n-1)\ell} = 
\begin{bmatrix} 
0 & P_{(n-1)\ell} \\ 
P_{(n-1)\ell} & 0 
\end{bmatrix} \in \mat{2^n}{\bb{R}}, & k \in \bracket{2^n}\backslash \bracket{2^{n-1}},
\end{array} \right. \]
where $\ell = k - 2^{n-1}$.
Then, for $n \in \bb{N}$:
\begin{enumerate}[label=(\roman*)]
\item $P_{n1} = I$; 
\item $P_{n 2^n} = K$;
\item $P_{nk}$ is a trisymmetric permutation matrix, $\forall k \in \langle 2^n \rangle$;
\item $\sum_{k=1}^{2^n} P_{nk} = J$;
\item If $\mathscr{P}_n := \left\{ P_{n1}, \dots, P_{n2^k} \right\}$, then $\{ \mathscr{P}_n, \times \} \cong (\bb{Z}_2)^n$; 
\item $P_{nk}^2 = I$;
\item if $v^\top : = e_k^\top H_n$, then $P_{nk}= 2^{-n} H_n D_{v} H_n$, $\forall k \in \langle 2^n \rangle$;
\item for every $k \neq \ell$, $\left\langle P_{nk}, P_{n\ell} \right\rangle := \trace{P_{nk}^\top P_{n\ell}}=0$; and
\item for every $k$, $\ell \in \bracket{n}$, there is a $j \in \bracket{n}$ such that $P_{nk}  P_{n\ell} =  P_{n\ell} P_{nk} = P_{nj}$.
\end{enumerate}
\end{prop}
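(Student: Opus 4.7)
The plan is to run an induction on $n$ that establishes the items in an order convenient for reuse, leaning on the two Kronecker product identities $H_n = H_1 \otimes H_{n-1}$ and $(A\otimes B)(C\otimes D) = AC \otimes BD$, together with the base facts $H_1^2 = 2I$, $P_{11} = I$, and $P_{12} = K$ at size two. Most parts then reduce to transferring a property from level $n-1$ through a Kronecker product; the only substantive calculation is (vii), from which (viii), (ix), and (v) follow cleanly.

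I would first handle the easy items (i), (vi), (iii), (iv), and (ii). Part (i) is $P_{n1} = P_{11}\otimes P_{(n-1)1} = I\otimes I = I$. Part (vi) uses $(A\otimes B)^2 = A^2\otimes B^2$ together with $P_{11}^2 = P_{12}^2 = I$. For (iii), both $P_{11}$ and $P_{12}$ are symmetric and centrosymmetric; combined with the exchange-matrix identity $K_{2^n} = K_2\otimes K_{2^{n-1}}$ (both sides reverse the lexicographic ordering of pair indices) and the fact that Kronecker products preserve each symmetry, the induction gives trisymmetry (possession of two of the symmetries forces the third). For (iv), splitting the sum at $k = 2^{n-1}$ and factoring produces
\begin{equation*}
\sum_{k=1}^{2^n} P_{nk} = (P_{11}+P_{12})\otimes \sum_{k=1}^{2^{n-1}} P_{(n-1)k} = J_2 \otimes J_{2^{n-1}} = J.
\end{equation*}
Part (ii) is then $P_{n,2^n} = P_{12}\otimes P_{(n-1),2^{n-1}} = K_2\otimes K_{2^{n-1}} = K$ by the same exchange-matrix identity.

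The heart of the argument is (vii), which I expect to be the main obstacle, mostly in terms of bookkeeping. Proceeding by induction, I split on whether $k\le 2^{n-1}$ or $k>2^{n-1}$. In the first case, using $e_k = e_1\otimes e_k$ in the bipartite indexing gives $v^\top = [1,1]\otimes u^\top$ with $u = H_{n-1}e_k$, so $D_v = I_2\otimes D_u$ and
\begin{equation*}
H_n D_v H_n = (H_1\otimes H_{n-1})(I_2\otimes D_u)(H_1\otimes H_{n-1}) = H_1^2 \otimes (H_{n-1}D_u H_{n-1}),
\end{equation*}
which by the inductive hypothesis equals $2I\otimes 2^{n-1}P_{(n-1)k} = 2^n P_{nk}$. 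For $k>2^{n-1}$ with $\ell = k-2^{n-1}$, the indexing gives $v^\top = [1,-1]\otimes u^\top$, so $D_v = \diag{1,-1}\otimes D_u$; the same calculation then hinges on the $2\times 2$ identity $H_1\diag{1,-1}H_1 = 2P_{12}$, which I verify by hand.

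Once (vii) is in place, (viii) and (ix) reduce to one-line checks. Since $H_n^2 = 2^n I$,
\begin{equation*}
P_{nk}P_{n\ell} = 2^{-n} H_n D_{v_k\circ v_\ell} H_n,
\end{equation*}
whose trace is $v_k^\top v_\ell = e_k^\top H_n H_n e_\ell = 2^n \delta_{k\ell}$, yielding (viii). For (ix), a short side induction confirms that the rows of $H_n$ are closed under the Hadamard product (the cases $[r_1,r_1]\circ[r_2,\pm r_2]$ yield another row of $H_n$ of the appropriate type), so $v_k\circ v_\ell = v_j$ for some $j$, whence $P_{nk}P_{n\ell} = 2^{-n}H_n D_{v_j} H_n = P_{nj}$; commutativity is automatic from that of $\circ$. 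Finally, (v) follows as a corollary: (i), (vi), and (ix) show $\mathscr{P}_n$ is a finite abelian group in which every non-identity element has order two, and the orthogonality (viii) ensures the $2^n$ listed matrices are distinct, so the classification of finite abelian groups forces $\mathscr{P}_n \cong (\bb{Z}_2)^n$.
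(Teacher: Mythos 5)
Your proof is correct, and its core --- the induction for part (vii) that splits on $k\le 2^{n-1}$ versus $k>2^{n-1}$ and passes the identity through the recursive structure of $H_n$ --- is exactly the paper's argument, written in Kronecker-product language rather than the paper's equivalent $2\times 2$ block form. The differences lie in how the remaining items are wired together. For (viii) the paper argues combinatorially from (iii) and (iv): the $P_{nk}$ are permutation matrices summing to $J$, hence have pairwise disjoint supports and are trivially orthogonal; your trace computation $\trace{P_{nk}P_{n\ell}} = v_k^\top v_\ell = 2^n\delta_{k\ell}$ via (vii) is correct but uses heavier machinery for the same conclusion (it does, as a bonus, give you distinctness of the $2^n$ matrices for free, which you then need for (v)). For (ix) you take what the paper explicitly offers as its alternate route, namely (vii) plus closure of the rows of $H_n$ under Hadamard product. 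For (v) the paper simply asserts an induction, whereas you derive it as a corollary of (i), (vi), (viii), and (ix) via the classification of elementary abelian $2$-groups; this is a clean, non-circular dependency order (you never use (v) to get (ix)) and arguably more transparent than the paper's unelaborated induction. One cosmetic note: the paper's case 2 of (vii) writes $\ell := k - 2^{m-1}$, which is a typo for $k-2^m$; your indexing is the consistent one.
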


\begin{proof}
Parts (i)--(v) follow readily by induction on $n$; part (vi) follows from part (iii); part (viii) follows from parts (iii) and (iv); and part (ix) follows from parts (v) and (vii) (alternately, part (ix) follows from part (vii) and the fact that the rows of $H_n$ form a group with respect to Hadamard product).

For part (vii), we proceed by induction on $n$: for $n=1$, the result follows by a direct computation. 

Assume that the result holds when $n = m > 1$. If $v \in \bb{R}^{2^{m+1}}$, and $u$ and $w$ are the vectors in $\bb{R}^{2^m}$ defined by 
\begin{align} v = \begin{bmatrix} u \\ w \end{bmatrix}, \label{vecsplit} \end{align} 
then $D_v = D_u \oplus D_w$ and  
\begin{align}
H_{m+1} D_v H_{m+1} =
\begin{bmatrix}
H_m D_{u + w} H_m & H_m D_{u - w} H_m \\
H_m D_{u - w} H_m & H_m D_{u + w} H_m
\end{bmatrix}. \label{haddiag}
\end{align}

We distinguish the following cases:
\begin{enumerate}[label=(\roman*)]
\item $k \in \bracket{2^m}$. If $v^\top := e_k^\top H_{m+1}$, then, $u = w = H_m e_k$. Following \eqref{haddiag} and the induction-hypothesis,    
notice that
\begin{align*}
H_{m+1} D_v H_{m+1} 
&= 
\begin{bmatrix}
2 H_m D_u H_m & 0 	\\
0 & 2 H_m D_u H_m
\end{bmatrix}							\\
&= 
\begin{bmatrix}
2 \cdot 2^m P_{mk}  & 0 \\
0 & 2 \cdot 2^m P_{mk} 
\end{bmatrix} = 2^{m+1} P_{(m+1)k},
\end{align*}
and the result is established.

\item $k \in \bracket{2^{m+1}}\backslash \bracket{2^m}$. If $v^\top := e_k^\top H_{m+1}$, then $u = -w = H_m e_\ell$, where $\ell := k - 2^{m-1}$. Following \eqref{haddiag} and the induction-hypothesis, notice that 
\begin{align*}
H_{m+1} D_v H_{m+1} 
&= 
\begin{bmatrix}
0 & 2 H_m D_u H_m  \\
2 H_m D_u H_m & 0
\end{bmatrix}							\\
&= 
\begin{bmatrix}
0 & 2 \cdot 2^m P_{m\ell}   \\
2 \cdot 2^m P_{m\ell}  & 0
\end{bmatrix} 
= 2^{m+1} P_{(m+1)k},
\end{align*}
and the result is established. \qedhere
\end{enumerate}
\end{proof}

\begin{thm}
\label{walshcone}
The Perron spectracone of the Walsh matrix of order $2^n$ is the conical hull of its rows. 
\end{thm}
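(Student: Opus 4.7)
The plan is to prove both inclusions by leveraging Proposition \ref{propas}, especially parts (iv) and (vii).

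For the inclusion $\coni{h_1,\dots,h_{2^n}} \subseteq \cone{H_n}$, where $h_k^\top := e_k^\top H_n$ denotes the $k\textsuperscript{th}$ row of $H_n$, I would note that part (vii) of Proposition \ref{propas} directly gives $H_n D_{h_k} \inv{H_n} = P_{nk}$ (using $\inv{H_n} = 2^{-n} H_n$). Since each $P_{nk}$ is a permutation matrix, hence nonnegative, each row $h_k$ lies in $\cone{H_n}$. Convexity of $\cone{H_n}$ then finishes this direction.

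For the reverse inclusion $\cone{H_n} \subseteq \coni{h_1,\dots,h_{2^n}}$, I would use the fact that the rows of $H_n$ form an orthogonal basis of $\bb{R}^{2^n}$ (with $h_i^\top h_j = 2^n \delta_{ij}$), so every $x \in \bb{R}^{2^n}$ admits a unique expansion $x = \sum_k \lambda_k h_k$. The task reduces to showing that if $H_n D_x \inv{H_n} \geq 0$, then every $\lambda_k \geq 0$. Expanding with $D_x = \sum_k \lambda_k D_{h_k}$ and applying part (vii) yields
\[
H_n D_x \inv{H_n} = \sum_{k=1}^{2^n} \lambda_k\, P_{nk}.
\]

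The crux of the argument — and the only step with any real content — is to extract the $\lambda_k$ from this sum. Here part (iv) of Proposition \ref{propas} is essential: since $\sum_k P_{nk} = J$ and each $P_{nk}$ has entries in $\{0,1\}$, the $P_{nk}$ partition the all-ones matrix, so at each position $(i,j)$ exactly one $P_{nk}$ has a $1$, say at index $k(i,j)$. Consequently, the $(i,j)$-entry of $\sum_k \lambda_k P_{nk}$ equals $\lambda_{k(i,j)}$. Since every $k \in \bracket{2^n}$ appears as some $k(i,j)$ (the permutation matrix $P_{nk}$ has $2^n$ ones), the nonnegativity of $H_n D_x \inv{H_n}$ forces $\lambda_k \geq 0$ for all $k$, completing the proof.

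I do not anticipate any real obstacle, as the heavy lifting — the group-theoretic structure of the $P_{nk}$ and the identity $\sum_k P_{nk} = J$ — has already been established in Proposition \ref{propas}. The only subtlety is to observe that this identity, combined with the $0/1$ structure of the $P_{nk}$, makes the expansion $\sum_k \lambda_k P_{nk}$ into a matrix whose entries are literally the coefficients $\lambda_k$ (each appearing $2^n$ times), which is exactly what is needed to read off nonnegativity of the coefficients from nonnegativity of the matrix.
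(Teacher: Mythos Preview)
Your proposal is correct and follows essentially the same route as the paper's proof: both expand $H_n D_x \inv{H_n}$ as $\sum_k \lambda_k P_{nk}$ via part (vii) of Proposition~\ref{propas}, and then use parts (iii) and (iv) (the $P_{nk}$ are $0/1$ matrices summing to $J$, hence partitioning the entries) to conclude that the matrix is nonnegative if and only if all coefficients $\lambda_k$ are nonnegative. The only cosmetic difference is that the paper handles both inclusions simultaneously by parameterizing via the coefficient vector from the outset, whereas you treat the two inclusions separately; the substance is identical.
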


\begin{proof}
For $x \in \bb{C}^{2^n}$, let $v^\top := x^\top H_n$, and $M = M_x := 2^{-n} H_n D_v H_n$. Since  
\[ v^\top = x^\top H_n = \left( \sum_{k=1}^{2^n} x_k  e_k^\top \right) H_n  = \sum_{k=1}^{2^n} x_k \left( e_k^\top H_n \right) = \sum_{k=1}^{2^n} x_k v_k^\top, \]
where $v_k :=  e_k^\top H_n$, it follows from part (vii) of Proposition \hyperref[propas]{\ref*{propas}} that 
\begin{align*}
M = 
2^{-n} H_n D_v H_n = 
\sum_{k=1}^{2^n} x_k \left( 2^{-n} H_n D_{v_k} H_n \right) = 
\sum_{k=1}^{2^n} x_k P_{nk},
\end{align*}
i.e., $M \in \spn{\mathscr{P}_n}$. 

In view of parts (iii) and (iv) of \hyp{Propostion}{propas}, it follows that the $k\textsuperscript{th}$-entry of $x$ appears exactly once in each row and column of $M$. Thus, $M \geq 0$ if and only if $x \geq 0$, i.e., if and only if $v \in \coni{v_1, \dots, v_{2^n}}$.
\end{proof}

The following result is useful in quickly determining whether a vector belongs to $\cone{H_n}$.

\begin{cor}
If $v \in \bb{R}^{2^n}$, where $v_1 \geq \cdots \geq v_n$, then $v \in \cone{H_n}$ if and only if $H_n v \geq 0$.
\end{cor}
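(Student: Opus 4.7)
The plan is to deduce this corollary directly from \hyp{Theorem}{walshcone} together with the well-known properties of $H_n$ listed just before \hyp{Proposition}{propas}. No new combinatorial work is needed; the ordering hypothesis $v_1 \geq \cdots \geq v_{2^n}$ is not actually used in the argument, but appears because in the intended application $v$ will be a normalized spectrum.

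First I would unpack what \hyp{Theorem}{walshcone} says in a computationally convenient way. By the theorem, $\cone{H_n}$ is the conical hull of the rows of $H_n$, and since $H_n = H_n^\top$, the rows and columns of $H_n$ agree. Hence $v \in \cone{H_n}$ if and only if there exists a nonnegative vector $\lambda \in \bb{R}^{2^n}$ such that $v = H_n \lambda$, the coefficients $\lambda_k$ being the weights in the conical combination.

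Next I would invert this relation. Since $\inv{H_n} = 2^{-n} H_n$, the linear system $v = H_n \lambda$ has the unique solution $\lambda = \inv{H_n} v = 2^{-n} H_n v$. Consequently, $v \in \cone{H_n}$ if and only if $2^{-n} H_n v \geq 0$, and since $2^{-n} > 0$ this is equivalent to $H_n v \geq 0$. This is exactly the claim.

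There is no real obstacle here: the entire proof is a one-line chain of equivalences, and the only facts needed are the symmetry $H_n^\top = H_n$ and the self-inverse-up-to-scalar identity $\inv{H_n} = 2^{-n} H_n$, both already recorded. The only thing I would double-check before finalizing is that the ordering condition on the entries of $v$ does not hide some additional content (it does not), and that the writeup explicitly invokes \hyp{Theorem}{walshcone} in the forward direction so the reader sees where the nontrivial input is used.
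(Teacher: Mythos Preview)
Your proposal is correct and follows essentially the same route as the paper's proof: both invoke \hyp{Theorem}{walshcone} to write $v \in \cone{H_n}$ as $v = H_n \lambda$ with $\lambda \geq 0$, and then use $\inv{H_n} = 2^{-n} H_n$ to solve for $\lambda$ and obtain the equivalence $H_n v \geq 0$. Your observation that the ordering hypothesis on the entries of $v$ is not actually used is also correct.
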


\begin{proof}
If $v \in \cone{H_n}$, then following \hyp{Theorem}{walshcone}, there is a nonnegative vector $x$ such that $v = H_n x$; multipying both sides of this equation by $H_n$ yields $H_n v = 2^n x \geq 0$. 

Conversely, if $x := H_n v \geq 0$, then $H_n x = 2^n v$, i.e., $v = H_n (x/2^n)$; the result now follows from \hyp{Theorem}{walshcone}.
\end{proof}

Parts (i), (iii), (iv), and (ix) of \hyp{Propostion}{propas} demonstrate that $\mathscr{P}_n$ is a $(2^n-1)$-class symmetric (and hence commutative) \emph{association scheme} (see, e.g., the survey \cite{mt2009} and references therein). As a consequence, $\mathcal{A}(H_n)$ is a nonnegative \emph{Bose-Mesner algebra}, i.e., it is closed with respect to matrix transposition, matrix multiplication, and Hadamard product. 

A straightforward proof by induction shows that 
\[
M_x = 
\begin{bmatrix}
x^\top P_{n1} 	\\
\vdots 		\\
x^\top P_{n2^k}
\end{bmatrix} 
=
\begin{bmatrix}
P_{n1} x & \cdots & P_{n2^k}x 
\end{bmatrix}. 
 \]
For example, if $x \in \bb{C}^8$, then 
\[ M_x =
\left[ \begin{array}{*{4}{c}|*{4}{c}} 
x_1 & x_2 & x_3 & x_4 & x_5 & x_6 & x_7 & x_8	\\ 
x_2 & x_1 & x_4 & x_3 & x_6 & x_5 & x_8 & x_7	\\ 
x_3 & x_4 & x_1 & x_2 & x_7 & x_8 & x_5 & x_6	\\
x_4 & x_3 & x_2 & x_1 & x_8 & x_7 & x_6 & x_5	\\
\hline
x_5 & x_6 & x_7 & x_8 & x_1 & x_2 & x_3 & x_4	\\ 
x_6 & x_5 & x_8 & x_7 & x_2 & x_1 & x_4 & x_3	\\ 
x_7 & x_8 & x_5 & x_6 & x_3 & x_4 & x_1 & x_2	\\
x_8 & x_7 & x_6 & x_5 & x_4 & x_3 & x_2 & x_1
\end{array}\right]. \]
Moreover, if $x$, $y \in \bb{C}$, then $M_x \circ M_y = M_{x \circ y}$ and $M_x M_y = M_z$, where 
\[ z = 
\begin{bmatrix} 
x^\top P_{n1}y	\\ 
x^\top P_{n2}y 	\\ 
\vdots 		\\ 
x^\top P_{n2^k}y 
\end{bmatrix} =
\begin{bmatrix} 
x^\top y 		\\ 
x^\top P_{n2}y 	\\ 
\vdots 		\\ 
x^\top Ky 
\end{bmatrix}. \]

\begin{cor}
\label{walshspec}
The Perron spectratope of the Walsh matrix of order $2^n$ is the convex hull of its rows. 
\end{cor}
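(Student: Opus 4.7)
The plan is to derive this corollary as a direct consequence of Theorem \ref{walshcone} together with the specific normalization enforced by the definition of the Perron spectratope.

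By Theorem \ref{walshcone}, every $v \in \cone{H_n}$ admits a representation
\[ v = \sum_{k=1}^{2^n} \alpha_k r_k, \qquad \alpha_k \geq 0, \]
where $r_k := (e_k^\top H_n)^\top$ is the $k$-th row of $H_n$ written as a column vector. I would then invoke the listed property $H_n e_1 = e$, which says that the first column of $H_n$ is the all-ones vector; equivalently, the first entry of every row $r_k$ equals $1$. Consequently the first coordinate of any conical combination collapses to
\[ v_1 = \sum_{k=1}^{2^n} \alpha_k \cdot 1 = \sum_{k=1}^{2^n} \alpha_k. \]

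The definition of the spectratope reads $\tope{H_n} = \{x \in \cone{H_n} : x_1 = 1\}$, so the constraint $v_1 = 1$ is precisely the constraint $\sum_{k=1}^{2^n} \alpha_k = 1$. Therefore $v \in \tope{H_n}$ if and only if $v$ is a convex combination of the rows of $H_n$, yielding $\tope{H_n} = \conv{r_1, \dots, r_{2^n}}$.

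There is essentially no technical obstacle here; the entire content of the corollary is packaged into Theorem \ref{walshcone}, and the role of this proof is simply to observe that \emph{normalizing by $x_1 = 1$} is equivalent to \emph{normalizing the coefficients of the conical combination to sum to $1$}, which is made possible by the uniform first entry of the rows of $H_n$. I would also briefly remark that the extreme points of $\tope{H_n}$ are exactly the rows $r_1, \dots, r_{2^n}$, since the rows are linearly independent (as $H_n$ is invertible) and hence affinely independent, making the convex hull a simplex with these $2^n$ vertices.
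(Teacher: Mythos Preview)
Your proposal is correct and is exactly the argument the paper has in mind: the corollary is stated without proof in the paper, being an immediate consequence of \hyp{Theorem}{walshcone} together with the observation that the first entry of every row of $H_n$ equals $1$ (property (v): $H_n e_1 = e$), so that the normalizing condition $x_1 = 1$ converts conical combinations into convex ones. Your additional remark that $\tope{H_n}$ is a $(2^n-1)$-simplex is also consistent with the paper, which uses precisely this fact in the volume computation immediately following the corollary.
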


The import of \hyp{Corollary}{walshspec} can be viewed through the following lens: given an affinely independent set $V = \{ v_1,\dots,v_{n+1} \} \in \bb{R}^n$, recall that the \emph{n-simplex} of $V$ is the set $\mathcal{S}(V) := \conv{V}$. It is well-known (see, e.g., \cite{s1966}) that 
\begin{equation}
\vol{\mathcal{S}(V)} = \left| \frac{1}{n!} \det{M} \right|, \label{simplexvol}
\end{equation} 
where 
\[ M = 
\begin{bmatrix}
1 & v_1^\top 	\\
\vdots & \vdots 	\\
1 & v_{n+1}^\top
\end{bmatrix}. \]
Thus, 
\[ \vol{\mathcal{W} (H_n)} = \frac{1}{(2^n)!}2^{n2^{n-1}}~\mbox{and}~
\vol{\mathcal{P}^1 (H_n)} = \frac{1}{(2^n-1)!}2^{n2^{n-1}}. \]

We call a nonsingular matrix $S$ a \emph{strong Perron similarity} if there is a unique $i \in \bracket{n}$ such that $Se_i > 0$ and $e_i^\top \inv{S} >0$. If $S \in \gl{n}{\bb{R}}$ is a strong Perron similarity, then, without loss of generality, $S = [e~s_2~\cdots~s_n]$, where $|| s_i ||_\infty = 1$. 

Since a Hadamard matrix has maximal determinant among matrices whose entries are less than or equal to 1 in absolute value, it follows that if $\mathcal{P}^1(S) = \mathcal{S}(s_2,\dots,s_{2^n})$, then $\vol{\mathcal{P}^1(S)} \leq \vol{\mathcal{P}^1 (H_n)}$. It is an open question whether $\mathcal{P}^1 (H_n)$ has maximal volume for all Perron spectratopes.

Moreover, \hyp{Corollary}{walshspec} does not seem to hold for general Hadamard matrices. If $H$ is a Hadamard matrix, then any matrix resulting from negating its rows or columns is also a Hadamard matrix. Indeed, if $u$ denotes the $i\textsuperscript{th}$-column of $H$ and $w$ denotes its $i\textsuperscript{th}$-row, then the $i\textsuperscript{th}$-row and $i\textsuperscript{th}$-column of the Hadamard matrix $\hat{H} = D_{h_{ii}u} H D_{w}$ are positive. Thus, without loss of generaltiy, we assume that the first row and column of any Hadamard matrix are positive and refer to such a matrix as a \emph{normalized Hadamard matrix}. 

It can be verified via the MATLAB-command \texttt{Hadamard(12)} that only the first row of the normalized Hadamard matrix of order twelve belongs to its Perron spectratope. However, it is clear every row of a normalized Hadamard matrix is realizable since every row (sans the first) contains an equal number of positive and negative entries and thus is realizable (after a permutation) by the Perron similarity $\bigoplus_{i=1}^{n/2} H_1$.

\section{Sule\u{\i}manova spectra, the DS-RNIEP, and the DS-SNIEP}

We begin with the following definition.

\begin{mydef}
We call $\sigma = \{ \lambda_1, \dots, \lambda_n \} \subset \bb{R}$ a \emph{Sule\u{\i}manova spectrum} if $s_1 (\sigma) \geq 0$ and $\sigma$ contains exactly one positive value.    
\end{mydef}

In \cite{s1949}, Sule\u{\i}manova stated that every such spectrum is realizable (for a proof via companion matrices, and references to other proofs, see Friedland \cite{f1978}). Fiedler \cite{f1974} proved that every Sule\u{\i}manova spectrum is symmetrically realizable. In \cite{jmp2015}, Johnson et al.~posed the following.

\begin{prob}
\label{jmpsul}
If $\sigma$ is a normalized Sule\u{\i}manova spectrum, is $\sigma$ realizable by a doubly stochastic matrix? 
\end{prob}
We will show that for Hadamard orders, the answer is `yes' and the realizing matrix can be taken to be symmetric.

\begin{thm}
\label{thm:hadsul}
If $H$ is a normalized Hadamard matrix of order $n$ and $\sigma = \{ \lambda_1, \dots, \lambda_n \}$ is a normalized Sule\u{\i}manova spectrum, then $\sigma$ is realizable by a symmetric, doubly stochastic matrix. 
\end{thm}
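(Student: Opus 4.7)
The plan is to exhibit an explicit realizing matrix built directly from $H$. Let $\Lambda = \diag{(1,\lambda_2,\ldots,\lambda_n)}$ (so the Perron value $\lambda_1 = 1$ is placed in the $(1,1)$-position, matching the ``Perron'' row and column of the normalized $H$), and set
\[
A := \frac{1}{n}\, H \Lambda H^\top.
\]
Because $H H^\top = nI$, the matrix $Q := H/\sqrt{n}$ is orthogonal, so $A = Q \Lambda Q^\top$ is an orthogonal similarity of $\Lambda$. This immediately gives both that $A$ is symmetric and that $\sig{A} = \sig{\Lambda} = \sigma$. This addresses two of the three requirements in one stroke.

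Next I would verify that $A$ is doubly stochastic. Since $H$ is normalized, its first column and first row are $e$, i.e., $H e_1 = e$ and $e_1^\top H = e^\top$; the latter yields $H^\top e = H^\top H e_1 = n\, e_1$. Therefore
\[
Ae = \tfrac{1}{n} H \Lambda H^\top e = \tfrac{1}{n} H \Lambda (n e_1) = \lambda_1 \, H e_1 = e,
\]
using $\lambda_1 = 1$. Symmetry of $A$ then gives $e^\top A = e^\top$, so $A$ is doubly stochastic provided it is entrywise nonnegative.

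The heart of the argument is nonnegativity, and this is where the Sule\u{\i}manova hypothesis is used. Writing out entries,
\[
(n A)_{ij} = \sum_{k=1}^{n} \lambda_k\, H_{ik} H_{jk}.
\]
Because $H$ is normalized, $H_{i1} = H_{j1} = 1$, so the $k=1$ term contributes exactly $\lambda_1 = 1$. For $k \geq 2$ we have $\lambda_k \leq 0$ and $H_{ik}H_{jk} \in \{\pm 1\}$, so each summand satisfies $\lambda_k H_{ik}H_{jk} \geq \lambda_k$ (equality when the sign product is $+1$; when it is $-1$, the summand becomes $-\lambda_k \geq 0 \geq \lambda_k$). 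Summing yields
\[
(nA)_{ij} \;\geq\; \lambda_1 + \sum_{k=2}^{n} \lambda_k \;=\; s_1(\sigma) \;\geq\; 0
\]
by the normalized Sule\u{\i}manova trace condition. Hence $A \geq 0$, completing all three requirements.

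I don't anticipate a genuine obstacle here; the only subtle point is making sure the Perron value is aligned with the ``Perron column'' of $H$ (namely its first column), which is precisely what the normalization hypothesis supplies, and then recognizing that the lower bound $\lambda_k$ for each off-diagonal summand is sharp exactly when one uses the Suleĭmanova hypothesis $s_1(\sigma) \geq 0$ to close the estimate. The construction also makes the result fully explicit, as promised in the remark preceding the theorem.
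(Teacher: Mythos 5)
Your proof is correct and builds exactly the same realizing matrix as the paper, namely $A=\tfrac{1}{n}H D_v H^\top$ with $v=[1~\lambda_2~\cdots~\lambda_n]^\top$, and uses the normalization of $H$ in the same way to get symmetry and double stochasticity. The only difference is in how nonnegativity is checked: the paper writes $v$ as a convex combination of $e_1$ and the vectors $e_1-e_k$ and verifies that each such vertex yields a nonnegative matrix, while you bound each entry of $nA$ directly from below by $s_1(\sigma)\geq 0$; both arguments rest on the same observation that $H_{ik}H_{jk}=\pm 1$.
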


\begin{proof}
It suffices to show that $v := [1~\cdots~\lambda_n]^\top \in \tope{H}$. For $k \in \langle n \rangle$, $k \neq 1$, let $D_k := e_1 e_1^\top - e_k e_k^\top$ and $M_k := \inv{n} H D_k H^\top$. Then
\begin{align*}
n M_k 
&= H  \left( e_1 e_1^\top - e_k e_k^\top \right) H^\top 								\\
&= H  e_1 \left( H e_1 \right)^\top - H e_k \left( H e_k \right)^\top  = J - H e_k \left( H e_k \right)^\top. 
\end{align*}
Because the matrix $H e_k \left( H e_k \right)^\top$ has entries in $\{\pm 1\}$, the matrix $J - H e_k \left( H e_k \right)^\top$ has entries in $\{0,2\}$, i.e., $M_k \geq 0$. Moreover, $\Diag{D_k} = e_1 - e_k$ so that $e_1 - e_k \in \tope{H}$. 

Since $\tope{H}$ is convex and $\{e_1,e_1-e_2,\dots,e_1-e_n\} \subset \tope{H}$, it follows that $\conv{e_1, e_1 - e_2, \dots, e_1 - e_n} \subseteq \tope{H}$. If $\mu_1 := s_1(\sigma)$ and $\mu_k := -\lambda_k$, for $k \geq 2$, then $\mu_k \geq 0$, $\sum_{k=1}^n \mu_k = 1$, and
\begin{align*}
v 
&= e_1 + \sum_{k = 2}^n \lambda_k e_k 								\\				
&= e_1 + \sum_{k = 2}^n (\lambda_k + \mu_k) e_1 + \sum_{k = 2}^n \lambda_k e_k 	\\
&= \mu_1 e_1 + \sum_{k = 2}^n \mu_i (e_1 - e_k). 
\end{align*}
Thus, $v \in \conv{e_1, e_1 - e_2, \dots,e_1 - e_n}$ and the result is established.
\end{proof}

\begin{rem}
For any normalized Hadamard matrix, notice that 
\begin{equation*}
\mathcal{H}_n := \conv{e, e_1 - e_2, \dots, e_1 - e_n} \subseteq \tope{H}.
\end{equation*}
Since 
\begin{equation*}
\frac{1}{n} \left[ e + \sum_{k=2}^n (e_1 - e_k) \right] = e_1,
\end{equation*} it follows that 
\begin{equation*}
\conv{e_1, e_1 - e_2, \dots, e_1 - e_n} \subset \mathcal{H}_n \subseteq \tope{H}.
\end{equation*}
Thus, 
\begin{equation*}
\bigcap_{\mathscr{H}_n} \tope{H} = \mathcal{H}_n,
\end{equation*}
where $\mathscr{H}_n$ contains every normalized Hadamard matrix of order $n$.

In terms of the projected Perron spectratope, notice that
\begin{equation*}
\pi_1(\mathcal{H}_n) = \conv{e,-e_2, \dots, -e_n} \subseteq \mathcal{P}^1(H) \subset \mathcal{T}^{n-1}
\end{equation*}
If 
\begin{equation*}
S=
\begin{bmatrix}
1 & e^\top  \\
e & -I 
\end{bmatrix} \in \gl{n}{\bb{R}},~n\geq 2
\end{equation*}
then $\det{S} = (-1)^{n+1} n$. Following \eqref{simplexvol},
\begin{equation*}
\vol{\mathcal{P}^1(H)} \geq \vol{\pi_1(\mathcal{H}_n)} = \frac{n}{(n-1)!} = \frac{n^2}{n!}. 
\end{equation*}
\end{rem}

The following result is corollary to \hyp{Corollary}{walshspec} and \hyp{Theorem}{thm:hadsul}.

\begin{cor}
\label{cor:hadsulei}
If $\sigma = \{\lambda_1, \dots, \lambda_{2^n} \}$ is a normalized Sule\u{\i}manova spectrum, then $\sigma$ is realizable by a trisymmetric doubly stochastic matrix.
\end{cor}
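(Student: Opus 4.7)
The plan is to combine Theorem~\ref{thm:hadsul} with the explicit algebraic structure of the Walsh matrix developed in Section~\ref{hadspec}. Apply Theorem~\ref{thm:hadsul} with $H = H_n$, which is a normalized Hadamard matrix of order $2^n$. This produces a symmetric, doubly stochastic matrix realizing $\sigma$, namely
\[
M := 2^{-n}\, H_n D_v H_n, \qquad v := [1~\lambda_2~\cdots~\lambda_{2^n}]^\top,
\]
and, in the course of its proof, establishes that $v \in \tope{H_n}$.

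By Corollary~\ref{walshspec}, $\tope{H_n}$ coincides with the convex hull of the rows of $H_n$, so there exists $x \in \bb{R}^{2^n}$ with $x \geq 0$ and $v = H_n x$. Substituting this identity into the definition of $M$ and applying part~(vii) of Proposition~\ref{propas} exactly as in the proof of Theorem~\ref{walshcone} yields the decomposition
\[
M = \sum_{k=1}^{2^n} x_k P_{nk},
\]
so $M$ lies in $\spn{\mathscr{P}_n}$.

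Finally, by part~(iii) of Proposition~\ref{propas} each $P_{nk}$ is trisymmetric, and therefore is symmetric, persymmetric, and centrosymmetric simultaneously. Centrosymmetry is preserved under arbitrary linear combinations, so $M$ is centrosymmetric; combined with the symmetry already supplied by Theorem~\ref{thm:hadsul}, $M$ satisfies two of the three defining symmetry conditions and is thus trisymmetric. There is no real obstacle: the realizing matrix produced by Theorem~\ref{thm:hadsul} automatically sits inside $\spn{\mathscr{P}_n}$, so the additional symmetry beyond what Theorem~\ref{thm:hadsul} already provides is essentially built in, and the corollary follows immediately.
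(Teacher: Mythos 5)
Your proof is correct and is exactly the route the paper intends: the paper states the corollary as an immediate consequence of Theorem~\ref{thm:hadsul} (applied to $H_n$) and Corollary~\ref{walshspec}, with the trisymmetry coming from the decomposition $M=\sum_k x_k P_{nk}$ into the trisymmetric permutation matrices of Proposition~\ref{propas}. You have merely filled in the details the paper leaves implicit, and they check out.
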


In \cite{f1974}, Fiedler showed that every Sule\u{\i}manova spectrum is symmetrically realizable. However, his proof is by induction and therefore does not explicitly yield a realizing matrix (the computation of which is of interest for numerical purposes), which is common for many NIEP results. Indeed, according to Chu: 
\begin{adjustwidth}{2.5em}{0pt}
Very few of these theoretical results are ready for implementation to actually compute [the realizing] matrix. The most constructive result we have seen is the sufficient condition studied by Soules \cite{s1983}. But the condition there is still limited because the construction depends on the specification of the Perron vector -- in particular, the components of the Perron eigenvector need to satisfy certain inequalities in order for the construction to work. \cite[p.~18]{c1998}. 
\end{adjustwidth}
Thus, \hyp{Theorem}{thm:hadsul} and \hyp{Corollary}{cor:hadsulei} are constructive versions of Fiedler's result for Hadamard powers.

\begin{cor}
\label{boyhansul}
If $\sigma = \{\lambda_1,\dots,\lambda_n\}$ is a normalized Sule\u{\i}manova spectrum, then there is a nonnegative integer $N$ such that 
\begin{equation*}
\hat{\sigma} := \sigma\cup\{\overbrace{0,\dots,0}^N\}
\end{equation*}
is realized by a symmetric, doubly stochastic $(n+N)$-by-$(n+N)$ matrix. If $n+N$ is a power of two, then the realizing matrix is trisymmetric. 
\end{cor}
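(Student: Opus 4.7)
The proof plan rests on the observation that adjoining zeros does not destroy the Suleĭmanova structure, so it suffices to inflate $n$ to a Hadamard order and invoke Theorem \ref{thm:hadsul}.

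First I would choose $N$ so that $n+N$ is a Hadamard order; taking $N := 2^{\lceil \log_2 n \rceil} - n$ always works, and in this case $n+N$ is a power of two, so there exists a (normalized) Walsh matrix $H_m$ of order $2^m = n+N$. Next I would verify that $\hat{\sigma}$ is a normalized Suleĭmanova spectrum: since $\sigma$ is normalized Suleĭmanova, $\lambda_1 = 1$ is the unique positive entry and $\lambda_i \leq 0$ for $i \geq 2$, so the rearrangement
\[
\hat{\sigma} = \{ 1, \underbrace{0,\dots,0}_{N},\lambda_2,\dots,\lambda_n \}
\]
is nonincreasing, contains exactly one positive entry, and satisfies $s_1(\hat{\sigma}) = s_1(\sigma) \geq 0$. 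Theorem \ref{thm:hadsul} then produces a symmetric doubly stochastic matrix realizing $\hat{\sigma}$.

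For the trisymmetry claim, I would unwind the construction in Theorem \ref{thm:hadsul} using Theorem \ref{walshcone}: any realizing matrix obtained there has the form $M_v = 2^{-m} H_m D_v H_m$ with $v \in \cone{H_m}$, and by the proof of Theorem \ref{walshcone},
\[
M_v = \sum_{k=1}^{2^m} x_k P_{mk}
\]
for suitable nonnegative scalars $x_k$, where the $P_{mk}$ are the association-scheme permutation matrices of Proposition \ref{propas}. Part (iii) of that proposition asserts that each $P_{mk}$ is trisymmetric, and since trisymmetry (being the conjunction of any two of symmetry, persymmetry, centrosymmetry) is preserved under arbitrary linear combinations, $M_v$ is trisymmetric. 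Combined with the doubly stochastic conclusion from Theorem \ref{thm:hadsul}, this gives the second sentence of the corollary.

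For the general existence claim (without the power-of-two requirement), one only needs $n+N$ to be a Hadamard order; powers of two are always available, so the first sentence is already covered by the construction above. There is no real obstacle here beyond verifying that padding with zeros preserves the normalized Suleĭmanova hypothesis and that the explicit matrix produced by Theorem \ref{thm:hadsul} in the Walsh case lies in the span of the trisymmetric $P_{mk}$.
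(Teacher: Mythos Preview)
Your proposal is correct and follows exactly the route the paper intends: the paper states this result without proof, treating it as immediate from \hyp{Theorem}{thm:hadsul} (pad with zeros to reach a Hadamard order, which preserves the normalized Sule\u{\i}manova hypothesis) together with \hyp{Corollary}{cor:hadsulei} (for powers of two the Walsh-matrix realization lies in $\spn{\mathscr{P}_m}$, hence is trisymmetric). Your write-up simply makes these implicit steps explicit, including the verification that $\hat{\sigma}$ remains normalized Sule\u{\i}manova and that trisymmetry is preserved under linear combinations of the $P_{mk}$.
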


\begin{rem}
If the Hadamard conjecture holds, then $N \leq 3$. Moreover,  \hyp{Corollary}{boyhansul} is a constructive version of the celebrated Boyle-Handelman theorem \cite[Theorem 5.1]{bh1991} for Sule\u{\i}manova spectra.
\end{rem}

A natural variation of \hyp{Problem}{jmpsul} is the following.

\begin{prob}
\label{jmpsulzero}
If $\sigma = \{\lambda_1,\dots,\lambda_n\}$, $n\geq 2$ is a normalized Sule\u{\i}manova spectrum such that $s_1(\sigma) = 0$, is $\sigma$ realizable by a doubly stochastic matrix? 
\end{prob}
Although the trace-zero assumption is rather restrictive, we demonstrate it is a nontrivial problem: If $A$ is a $3$-by-$3$ doubly stochastic matrix and $\trace{A} = 0$, then
\begin{equation*}
A=
\begin{bmatrix}
0 & a & 1-a \\
1-a & 0 & a \\
a & 1-a & 0
\end{bmatrix},~a \in [0,1],
\end{equation*}
and $\sig{A} = \{1,-1/2 \pm \sqrt{3}(a-1/2)\ii\}$, where $\ii := \sqrt{-1}$. Clearly, $\sigma$ is real if and only if $a=1/2$. Thus, $\{1,-1/2,-1/2\}$ is the only normalized Sule\u{\i}manova spectrum that is realizable by a $3$-by-$3$ trace-zero doubly stochastic matrix. 

We conclude with the observation that the Perron spectratopes of $H_1$ and $H_2$ resolves \hyp{Problem}{jmpsulzero} when $n=2$ and $n=4$, respectively (see \hyp{Figure}{rniepnequalstwo} and \hyp{Figure}{hadspectwo}).



\bibliography{master}
\bibliographystyle{abbrv}

\end{document}